\documentclass[11pt,a4paper]{article}

\usepackage{moreverb,url}

\usepackage[colorlinks,bookmarksopen,bookmarksnumbered,citecolor=red,urlcolor=red]{hyperref}

\newcommand\BibTeX{{\rmfamily B\kern-.05em \textsc{i\kern-.025em b}\kern-.08em
T\kern-.1667em\lower.7ex\hbox{E\kern-.125emX}}}

\usepackage{graphicx, amsmath, amsthm, theorem, amssymb}

\usepackage{algorithm}
\usepackage{algorithmicx}
\usepackage[noend]{algpseudocode}
\usepackage{graphicx}
\usepackage{accents}

\usepackage{color}

\newcounter{theo}
\renewcommand{\thetheo}{\arabic{theo}}

\newenvironment{theo}
{%
\refstepcounter{theo}%
\par\medskip
\noindent\textbf{Theorem~\thetheo.}\itshape
}
{%
\par\medskip
}

\newcounter{lm}
\renewcommand{\thelm}{\arabic{lm}}

\newenvironment{lm}
{%
\refstepcounter{lm}%
\par\medskip
\noindent\textbf{Lemma~\thelm.}\itshape
}
{%
\par\medskip
}

\newcounter{rem}
\renewcommand{\therem}{\arabic{rem}}

\newenvironment{rem}
{%
\refstepcounter{rem}%
\par\medskip
\noindent\textbf{Remark~\therem.}\itshape
}
{%
\par\medskip
}

\newcommand{\OBS}{\mathcal{O}}
\newcommand{\diff}{d}
\newcommand{\uideg}{\textnormal{deg}w}
\newcommand{\mw}{q}
\newcommand{\tih}{\widetilde{h}}
\newcommand{\tiz}{\widetilde{z}}
\newcommand*{\dt}[1]{%
\accentset{\mbox{\large\bfseries .}}{#1}}
\newcommand{\Li}{\mathcal{L}}
\newcommand{\giat}{\widehat{g}}
\newcommand{\tit}{\widetilde{\theta}}
\newcommand{\UIE}{\mathcal{UIE}}
\newcommand{\sumj}{\sum{j=1}^{m}}
\newcommand{\sumk}{\sum_{k=m+1}^{\mw}}
\newcommand{\uidnf}{\mathcal{N}_{\tih}(\Sigma)}

\begin{document}

\title{Structural Foundations of Nonlinear Systems with Unknown Inputs: The UID-Induced Normal Form and Minimal-Sensing Structure-from-Motion}

\author{Agostino Martinelli\
INRIA Rhone Alpes, France\\
\texttt{agostino.martinelli@inria.fr}}

\date{}

\maketitle

\begin{abstract}
This paper establishes the first general structural solution to the problem of state estimation for nonlinear systems driven by unknown inputs.
Building upon nonlinear unknown-input observability theory, we show that every such system admits a structurally equivalent representation, referred to as the \emph{UID-induced normal form}.
The proposed representation decomposes the information carried by the unknown inputs into two complementary components: unknown-input directions that are structurally decoupled from the observable dynamics and observable quantities that completely represent the unknown-input information affecting the observable dynamics.
As a consequence, the UID-induced normal form provides a unified structural solution to unknown-input decoupling and unknown-input reconstruction, without requiring any model or stochastic assumption on the unknown inputs.
The practical significance of the proposed framework is demonstrated through a previously unexplored minimal Structure-from-Motion configuration.
The proposed representation enables recursive state estimation from only three point features and a single-axis gyroscope, allowing the recovery of the three-dimensional structure and camera motion up to an unknown global scale factor.
Experiments on real-world data validate the proposed framework and demonstrate the feasibility of this minimal sensing configuration.

\end{abstract}

\section{Introduction}\label{SectionIntroduction}

State estimation for nonlinear systems driven by unknown inputs remains one of the fundamental open problems in robotics and control.
Unknown inputs may arise from unmodeled disturbances, environmental interactions, actuator effects, or motion components that are not directly measured.
Unlike unknown parameters or uncertain model coefficients, unknown inputs explicitly enter the system dynamics and continuously affect the state evolution.
Consequently, although the system equations are known, the actual dynamics followed by the state are not.
This lack of knowledge fundamentally limits state estimation.
All estimation methods for dynamical systems, whether based on recursive filtering, nonlinear observers, smoothing, or batch optimization, rely on the system dynamics to relate the state at different time instants.
In recursive estimation, this dependence appears explicitly in the prediction step, where the state is propagated according to the system dynamics before incorporating new measurements.
Since this propagation requires knowledge of the unknown inputs, the prediction model cannot, in general, be directly evaluated.
This difficulty lies at the heart of state estimation for nonlinear systems driven by unknown inputs.

To overcome this fundamental difficulty, the literature has traditionally followed three distinct research directions.

The first direction (modeling-based and state-augmentation methods) circumvents the problem by introducing additional models for the unknown quantities affecting the dynamics.
When these quantities have known dynamics, such as calibration parameters, sensor biases, or other constant system properties, they can be incorporated into an augmented state vector and estimated jointly with the system state.
This approach is widely used in robotics for sensor calibration and multi-modal state estimation, where observability analysis has been exploited to characterize the identifiability of calibration parameters and inertial quantities \cite{MirzaeiRoumeliotis2008,JonesSoatto2011}.
More generally, visual--inertial estimation methods routinely augment the state with additional variables, such as inertial biases and calibration parameters, within filtering frameworks \cite{MSCKF,Hesch2014,OCVINS}.
For unknown inputs whose evolution is not available, however, state augmentation requires introducing an assumed dynamical model.
A representative example is MonoSLAM \cite{davison2007}, where unknown camera accelerations are modeled as zero-mean Gaussian process noise in the prediction step of an EKF.
More generally, these approaches replace the original estimation problem with a different one by postulating an evolution model for the unknown inputs.
Their performance therefore ultimately depends on the validity of the adopted assumptions.


The second direction (unknown-input decoupling methods) seeks instead to solve the original problem without modeling the unknown inputs by eliminating their influence from the estimation dynamics.
This idea underlies the extensive literature on Unknown Input Observers (UIOs) and disturbance decoupling.
In the linear setting, the corresponding conditions are typically expressed through algebraic rank conditions and invariant subspace techniques (e.g., \cite{houmuller1992}), whereas nonlinear extensions rely on differential-geometric constructions involving distributions, Lie derivatives, and coordinate transformations (e.g., \cite{isidori1990decoupling}).
Despite decades of research, available nonlinear results remain limited to specific classes of systems and provide only partial structural conditions rather than a general characterization applicable to arbitrary nonlinear dynamics.

The third direction (unknown-input reconstruction methods) also aims at solving the original problem without introducing models for the unknown inputs.
Rather than eliminating their influence, these approaches attempt to recover the unknown-input information directly from output measurements and their time derivatives, typically through algebraic elimination or recursive differentiation techniques (e.g., \cite{fliess2008nonlinearestimation}).
In robotics, observability-based approaches have also been exploited to reconstruct unknown motion components directly from sensory measurements. For instance, Belo et al. \cite{Belo2010} showed that, in planar bearing-only SLAM, three known landmarks are sufficient to reconstruct the robot motion and the unknown inputs without assuming a motion model.
Although successful in several specific applications, the available methods remain problem dependent and no general structural framework is available for nonlinear systems.

These three research directions reflect fundamentally different strategies for overcoming the lack of knowledge of the system dynamics induced by unknown inputs.
The first circumvents the problem through modeling assumptions, whereas the second and the third seek to solve it directly by, respectively, eliminating or reconstructing the unknown-input information without assuming any model for its evolution.

In parallel with these developments, observability has progressively evolved from an analysis tool into a fundamental design principle for state estimation in robotics and computer vision.
Rather than merely characterizing which state components are observable, observability analysis has increasingly guided the design of filtering, smoothing, and optimization-based estimation algorithms by identifying the correct observable and unobservable directions of the system.

Early seminal works in SLAM and cooperative localization established that estimator inconsistency is directly linked to the incorrect handling of the system's unobservable subspace, showing that preserving the correct observability properties is essential for consistent covariance propagation \cite{Huang2008,Huang2010,MSCKF,Hesch2014,OCVINS}.
Subsequent work further demonstrated that, in vision-aided inertial navigation, observability analysis must account for the geometry of indistinguishable trajectories, leading to refined notions of sensitivity and model reduction beyond the classical observable/unobservable dichotomy \cite{Soatto2013ModelReduction,Soatto2015Observability}.
Observability analysis has also enabled closed-form and algebraic solutions in sensor calibration \cite{TRO11} and visual--inertial sensor fusion \cite{TRO12}, and has subsequently guided the design of estimation algorithms for increasingly complex sensing systems, including visual--inertial and LiDAR--inertial navigation with heterogeneous geometric features \cite{Huang2019,LICFusion2020}.
More recently, observability has also been exploited in observability-aware trajectory planning \cite{ObservabilityAwareUAV} and to analyze the evolution of the unobservable subspace throughout practical visual--inertial estimation pipelines \cite{Tian2025USE}.

A common feature of all these developments is that they consider systems driven by known inputs, where the observability properties are entirely determined by the system geometry and the sensing configuration.
Whether observability can provide an equally general structural foundation for state estimation in the presence of unknown inputs has remained an open question.

This paper answers this question affirmatively.
We show that every nonlinear system with unknown inputs admits a structurally determined representation, referred to as the \emph{UID-induced normal form}, that provides the first general structural solution to the two fundamental problems that have driven unknown-input estimation research for decades: unknown-input decoupling and unknown-input reconstruction.
The proposed result builds upon the first general solution of the nonlinear unknown-input observability problem established in \cite{TAC19,IF22,TAC_UIO}.
The key observation is that the corresponding observability analysis reveals a structural decomposition of the unknown-input information, rather than merely determining the observable component of the state.
In particular, the recursive algorithms developed in \cite{IF22,TAC_UIO} generate a set of observable quantities, referred to as \emph{unknown input identifiers} (UIDs).
We show that, once an admissible set of UIDs has been selected, every nonlinear system with unknown inputs admits a uniquely determined UID-induced representation, referred to as the \emph{UID-induced normal form}.
Although the resulting representation depends on the selected set of UIDs, different admissible choices lead to equivalent UID-induced normal forms related by an invertible change of coordinates in the unknown-input space.
This representation holds for general nonlinear systems, including arbitrary nonlinear dependence on the state, known inputs, and unknown inputs (see Equation~(\ref{EqGeneralSystem})).
It is not an observer-dependent construction nor an ad hoc coordinate transformation, but an intrinsic structural property emerging from the observability analysis itself.

The UID-induced normal form decomposes the information carried by the unknown inputs into two complementary components.
The first consists of unknown-input directions that do not influence the evolution of any observable quantity and are therefore structurally decoupled from state estimation.
The second contains all unknown-input information that affects the observable dynamics.
Remarkably, this information is not represented through unknown variables but through observable quantities generated recursively by the observability analysis, and is therefore reconstructible from the system outputs and a finite number of their time derivatives.

This decomposition shows that unknown-input decoupling and unknown-input reconstruction are not independent problems, but two complementary manifestations of the same observability structure.
Unlike previous approaches, which provide decoupling conditions or reconstruction procedures only for restricted classes of nonlinear systems, the UID-induced normal form establishes a general framework in which both objectives emerge simultaneously from the same structural decomposition.
As a consequence, state estimation can be formulated without introducing dynamical models, stochastic assumptions, or independent estimators for the original unknown inputs.

We demonstrate the practical implications of the proposed framework through a Structure-from-Motion (SfM) problem, where the UID-induced normal form enables the design of a recursive estimator for a previously unexplored minimal sensing configuration.
Classical SfM seeks to recover, up to a global scale factor, both the three-dimensional structure of a scene and the motion of a moving monocular camera from image measurements.
Since the instantaneous camera motion has six degrees of freedom, corresponding to three linear and three angular velocity components, SfM naturally leads to estimation problems involving multiple unknown inputs.
In particular, when only a single gyroscope axis is available, five motion components remain unknown and can be treated as unknown inputs.

A fundamental result in multiple-view geometry is that three point features are sufficient for SfM only when the camera rotations are known, or equivalently when the relative orientations between camera poses are available.
In practice, this corresponds to having complete rotational information, for example from a three-axis gyroscope.
This setting is closely related to classical SfM formulations with known camera rotations, where structure reconstruction reduces to triangulation up to an unknown global scale factor \cite{HartleyZisserman}.
In the absence of rotational information, classical SfM requires a larger number of visual features.
In particular, at least four point correspondences are typically required \cite{HartleyZisserman}, while minimal two-view formulations require five point correspondences \cite{Nister2004FivePoint} and classical epipolar geometry estimation relies on the well-known eight-point algorithm \cite{LonguetHiggins1981}.

The nonlinear unknown-input observability analysis developed in \cite{TAC19,IF22,TAC_UIO} reveals a minimal SfM configuration in which the three-dimensional structure and camera motion can be recovered, up to an unknown global scale factor, from only three point features and a single-axis gyroscope.
In this setting, the five remaining motion components are treated as unknown inputs.
To the best of our knowledge, this minimal sensing configuration has not been reported in the computer vision literature\footnote{The reason why this configuration does not arise in classical minimal SfM formulations is that nonlinear observability analysis is formulated in continuous time, whereas classical SfM problems are defined from a finite number of discrete images. In the latter setting, finite rotations about a single measured axis generally do not provide sufficient constraints because of the noncommutativity of three-dimensional rotations. By contrast, observability analysis relies on infinitesimal motions, whose first-order evolution is independent of Lie-bracket terms associated with rotational noncommutativity. The identified configuration should therefore be interpreted as the continuous-time counterpart of classical SfM, becoming practically relevant when visual and inertial measurements are acquired at sufficiently high rates relative to the system dynamics.}.

In this minimal configuration, a standard recursive estimator cannot directly propagate the state because five components of the camera motion are unknown.
The UID-induced normal form resolves this difficulty by transforming the unknown-input dynamics into a representation where the observable subsystem evolves exclusively in terms of measurable quantities.
This enables the construction of an EKF-based recursive estimator without introducing any model for the unknown motion components.

Finally, we analyze more general sensing configurations involving multiple gyroscope axes as well as the purely vision-based case.
These correspond respectively to four, five, and six unknown motion components.
In particular, when four or more point features are available, the proposed framework can also be applied without gyroscope measurements, consistently with the observability properties predicted by classical SfM theory.
Experimental results on real-world data validate the proposed approach and demonstrate the practical feasibility of the identified minimal Structure-from-Motion configuration.

\subsection{Contributions}

The contributions of this paper are as follows:

\begin{enumerate}

\item
We provide the first general structural solution to the problem of state estimation for nonlinear systems driven by unknown inputs.
Specifically, we show that every nonlinear system with unknown inputs admits a uniquely determined representation, referred to as the \emph{UID-induced normal form}, obtained from the unknown-input observability structure.
This representation provides a complete decomposition of the information carried by the unknown inputs into two complementary components: a component that is structurally decoupled from the observable dynamics and a component that is entirely represented through observable quantities generated by the observability analysis.
As a consequence, unknown-input decoupling and unknown-input reconstruction are no longer separate problems, but emerge as two aspects of the same decomposition revealed by the observability analysis.

\item
We demonstrate the practical relevance of the proposed framework through a Structure-from-Motion application.
Using the UID-induced normal form, we construct a recursive EKF estimator for a minimal sensing configuration consisting of three point features and a single-axis gyroscope, where the remaining motion components are treated as unknown inputs.
The proposed formulation reveals a previously unexplored minimal SfM configuration and provides a practical estimation framework validated through experiments on real-world data.
Moreover, we analyze additional sensing configurations, including cases with multiple gyroscope axes and the purely vision-based setting, showing the consistency of the proposed framework with classical SfM observability results.

\item
For completeness and accessibility, we provide a concise description of the algebraic observability algorithm developed in \cite{TAC_UIO}, together with a MATLAB implementation that facilitates the application of the proposed framework \cite{TUIO}.

\end{enumerate}

\section{Mathematical characterization}\label{SectionProblemFormulation}

\subsection{System Class}\label{SubSectionSystem}

We consider nonlinear systems driven by both known control inputs and unknown inputs. Throughout this paper, the control inputs are assumed to be assigned functions of time and therefore regarded as known signals. This setting corresponds to the estimation problems typically encountered in robotics, where the robot executes a prescribed task and the control actions are available to the estimator.

The class of systems considered throughout this work is described by

\begin{equation}
\label{EquationSystemDefinitionUIOsimp}
\left\{
\begin{array}{ll}
\dot{x}
&=
g^0(x,t,u)
+
\displaystyle\sum_{j=1}^{\mw}
g^j(x,t,u)\,w_j,
\\[2mm]
y
&=
\left[
h_1(x,t,u),
\ldots,
h_p(x,t,u)
\right]^\top .
\end{array}
\right.
\end{equation}

where:

\begin{itemize}

\item $t\in I$, with $I\subseteq \mathbb{R}$ an interval of time;

\item $x\in \mathcal{M}_n$, where $\mathcal{M}_n$ is an analytic manifold of dimension $n$;

\item $u=[u_1,\ldots,u_{m_u}]^\top$ denotes the known control inputs, assumed to be smooth functions of time;

\item $w=[w_1,\ldots,w_{\mw}]^\top$ denotes the unknown inputs, assumed to be smooth functions of time;

\item $g^0,g^1,\ldots,g^{\mw}$ are analytic vector fields on $\mathcal{M}_n$;

\item $h_1,\ldots,h_p$ are analytic scalar functions.

\end{itemize}

System (\ref{EquationSystemDefinitionUIOsimp}) will be referred to as the \emph{working representation} throughout this paper. This form explicitly separates the contribution of each unknown-input direction through the vector fields $g^1,\ldots,g^{\mw}$. As will become apparent in the following sections, both the observability analysis and the UID-induced normal form are naturally formulated in terms of these vector fields and their interaction with the output functions.

A large class of systems arising in robotics, computer vision, navigation, and control is already naturally expressed in the form (\ref{EquationSystemDefinitionUIOsimp}). Although (\ref{EquationSystemDefinitionUIOsimp}) is affine with respect to the unknown inputs, it does not constitute a restriction. Indeed, any nonlinear system whose dynamics and outputs depend arbitrarily on unknown inputs can be transformed into the form (\ref{EquationSystemDefinitionUIOsimp}) through a finite-dimensional state augmentation.

Specifically, consider the general nonlinear system

\begin{equation}
\label{EqGeneralSystem}
\left\{
\begin{array}{ll}
\dot{X}
&=
f(X,t,u_1,\ldots,u_{m_u},W_1,\ldots,W_{\mw}),
\\[1mm]
y
&=
h(X,t,u_1,\ldots,u_{m_u},W_1,\ldots,W_{\mw}),
\end{array}
\right.
\end{equation}

where the dependence on the unknown inputs
$W_1,\ldots,W_{\mw}$
is completely arbitrary and not necessarily affine.
Define the extended state
\(
x=
\begin{bmatrix}
X \\
W
\end{bmatrix},
\)
where
\(
W=
\begin{bmatrix}
W_1 & \cdots & W_{\mw}
\end{bmatrix}^{\!\top},
\)
and introduce the new unknown inputs
\(
w_j=\dot{W}_j,
~
j=1,\ldots,\mw .
\)

The dynamics of the extended state become

\[
\dot{x}
=
\begin{bmatrix}
f(X,t,u,W)
\\
0
\end{bmatrix}
+
\sum_{j=1}^{\mw}
\begin{bmatrix}
0
\\
e_j
\end{bmatrix}
w_j,
\]

where $e_j$ denotes the $j$th vector of the canonical basis of $\mathbb{R}^{\mw}$.

Furthermore, the output can be rewritten as

\[
y=h(x,t,u),
\]

since the unknown inputs have been incorporated into the state.

Therefore, system (\ref{EqGeneralSystem}) admits an equivalent representation of the form (\ref{EquationSystemDefinitionUIOsimp}). Consequently, the analysis developed in this paper applies to arbitrary nonlinear systems driven by unknown inputs through their corresponding working representation.

In the remainder of the paper, all observability results, the construction of the unknown input identifiers (UIDs), and the derivation of the UID-induced normal form will be formulated for systems written in the form (\ref{EquationSystemDefinitionUIOsimp}).
\subsection{Active and Passive Unknown-Input Observability}\label{SubSectionActivePassiveObs}

The notion of observability considered throughout this paper is local with respect to the initial state, in accordance with the standard differential-geometric framework for nonlinear systems. Unless otherwise stated, all distinguishability properties are therefore understood with respect to sufficiently small neighborhoods of the initial condition.

For nonlinear systems driven by both known control inputs and unknown inputs, observability admits two complementary interpretations, which we refer to as \emph{active} and \emph{passive} observability. These two notions coincide in the absence of control inputs and differ only when the system is driven by inputs that can be assigned by the user.

We first consider the case in which both control inputs and unknown inputs are present. Let $u(\cdot)$ denote the known control inputs and $w(\cdot)$ the unknown inputs. In the \emph{active} interpretation, two initial states are said to be distinguishable if there exists at least one admissible control input function $u(\cdot)$ such that, for any admissible unknown input trajectory $w(\cdot)$, the corresponding output trajectories are different. A state is therefore said to be observable if it can be distinguished from all neighboring initial states in this sense.

In contrast, the \emph{passive} interpretation fixes the control inputs a priori. Observability is then defined with respect to a given control input signal. In this setting, the system is regarded as driven by known time functions, and observability characterizes which components of the state can be reconstructed from the outputs and their time derivatives independently of $w$. Consequently, observability in the passive sense depends explicitly on the chosen control input trajectory.

From a practical perspective, particularly in robotics, the passive interpretation is often the most relevant. In many applications, the control inputs are determined by a prescribed task, and the estimation problem is naturally formulated as: given these inputs, what part of the state can be inferred from available measurements?

In the special case where only unknown inputs are present, the two notions coincide. In this situation, there is no freedom of input selection, and observability reduces to an intrinsic property of the input-output map.

In earlier works such as \cite{TAC19,IF22}, observability was formulated in the active sense. In contrast, \cite{TAC_UIO} and the present paper adopt the passive viewpoint. In fact, when control inputs are present but fixed, they can be equivalently regarded as known time-dependent terms within the system dynamics, effectively reducing the analysis to the passive setting.

\section{Observability Analysis in the Presence of Unknown Inputs}
\label{SectionUIOSolution}

This section provides a concise overview of the general observability framework for nonlinear systems driven by unknown inputs developed in \cite{TAC_UIO}. Rather than reproducing the theoretical derivations and proofs, our objective is to provide the operational ingredients required to apply the observability algorithm reported in Algorithm~\ref{AlgoFull}. The complete mathematical development, together with the convergence proofs and all formal definitions, can be found in \cite{TAC19,IF22,TAC_UIO}.
The presentation given here is intended to make the implementation of the algorithm straightforward, either from first principles or by using the publicly available MATLAB toolbox in \cite{TUIO}.

\subsection{The Algorithm and Its Ingredients}\label{SubSectionAlgo}

\begin{algorithm}\caption{The algorithm that computes an observability codistribution ($\OBS$) and the highest UI degree of reconstructability ($m_H$) for general nonlinear systems after their expression in the working representation (\ref{EquationSystemDefinitionUIOsimp}).}

\begin{algorithmic}[1]

\State Set $\Sigma$ the system in (\ref{EquationSystemDefinitionUIOsimp}), and
$\Omega=\textnormal{span}\left\{\diff h_1,\ldots,\diff h_p\right\}$.\label{AlgoLineINIT}

\Loop
\label{AlgoLineLoopInit}

\If{$\uideg \left({\Omega}\right)==\mw$ }
 
{\bf break}.
\EndIf\label{AlgoLineConditionCanonicIf}

\State $m=\uideg \left({\Omega}\right)$.\label{AlgoLineuideg}

\State $[\tih_1,\ldots, \tih_m]=\mathcal{S}(\Sigma,~\Omega)$,~~
$\Sigma=\mathcal{R}(\Sigma,~\tih_1,\ldots, \tih_m)$.\label{AlgoLineSelectionAndReordering}
\State \textnormal{Compute} $\widehat{g}^\alpha$ (Eq \ref{Equationgalpham})).\label{AlgoLineMuNuGiat}

\State $\Omega_*=\left<\left. \dt{\Li}_{\giat^0}, \ldots, \Li_{\giat^m}~\right|\Omega\right>$.\label{AlgoLineOmegaStar}

\If{($\giat^k\notin\Omega_*^\bot)$, for a given $m+1\le k\le \mw$}\label{AlgoLineConditionOmegaStarIf}

\State Determine $\tit$ and set $\Sigma\triangleq\UIE\left(\Sigma,~\tit\right)$. \label{AlgoLineConditionOmegaStarNo}

\State \textnormal{Set} $\Omega\triangleq\Omega+\diff\tit+\sumj\sum_{\alpha_j=0}^{\overline{j}}\diff v^{(\alpha_j)}_j$.\label{AlgoLineReset}

\Else
\label{AlgoLineConditionOmegaStarElse}

\State $\OBS=\Omega_*$, $m_H=m$ {\bf return}.\label{AlgoLineConditionOmegaStarReturn}

\EndIf\label{AlgoLineConditionOmegaStarEnd}

\EndLoop\label{AlgoLineLoopFinit}

\State $[\widetilde{h}_1,\ldots, \widetilde{h}_{\mw}]=\mathcal{S}(\Sigma,~\Omega)$ \textnormal{and compute} $\widehat{g}^\alpha$.\label{AlgoLineFINALPrep}
\State $\OBS=\left<\dt{\Li}_{\giat^0},\Li_{\giat^1},\ldots,\Li_{\giat^{\mw}}~\left|~\Omega\right.\right>$, $m_H=\mw$.\label{AlgoLineFINALSTEP}

\end{algorithmic}
\label{AlgoFull}
\end{algorithm}

Algorithm~\ref{AlgoFull} computes two fundamental quantities associated with the nonlinear system (\ref{EquationSystemDefinitionUIOsimp}):

\begin{itemize}

\item
the observable codistribution, denoted by $\OBS$, which completely characterizes the observability properties of the system;

\item
the highest unknown input degree of reconstructability, denoted by $m_H$, as defined in \cite{TAC_UIO}; this notion will be briefly recalled later together with its intuitive interpretation.

\end{itemize}

The rigorous definitions of all the quantities appearing in the algorithm, together with the proof of its correctness and convergence, are given in \cite{TAC_UIO}. In the remainder of this section we only introduce the concepts that are necessary to understand and implement the algorithm.

The first concept is that of a codistribution.
A codistribution on the state manifold $\mathcal{M}_n$ is a vector space generated by a collection of row vectors belonging to the cotangent space of $\mathcal{M}_n$. Since these row vectors generally depend on the state $x\in\mathcal{M}_n$, the corresponding vector space also varies with the state. Throughout this paper all the functions defining the system are assumed to be analytic. Under this assumption, the codistributions generated during the algorithm satisfy the standard regularity properties of geometric control theory and, in particular, have constant dimension over $\mathcal{M}_n$.

The special case of interest is that in which the generators are differentials of scalar functions. Such codistributions are called {\it integrable} and are the only ones generated by Algorithm~\ref{AlgoFull}. If $\Lambda$ is generated by the differentials of the scalar functions
$\lambda_1,\ldots,\lambda_r$, we write

\[
\Lambda=\mathrm{span}\{\diff\lambda_1,\ldots,\diff\lambda_r\}.
\]

The algorithm proceeds recursively. It starts from the codistribution generated by the output differentials,

\[
\Omega=\mathrm{span}\{\diff h_1,\ldots,\diff h_p\},
\]

as shown in Line~\ref{AlgoLineINIT}, and recursively enlarges this codistribution until the observable codistribution $\OBS$ is obtained.

A central quantity used throughout the algorithm is the unknown-input degree of reconstructability from a given codistribution, denoted by

\[
\uideg(\Lambda).
\]

Its formal definition is given in \cite{IF22, TAC_UIO}; operationally, it is computed as follows.
Assume that the integrable codistribution $\Lambda$ has dimension $r$ and is generated by the differentials of the scalar functions

\[
\lambda_1,\ldots,\lambda_r .
\]

One constructs the 
$r\times \mw$
matrix whose $(i,j)$ element is the Lie derivative

\[
\Li_{g^j}\lambda_i ,
\]

where $g^j$ denotes the vector field associated with the $j$-th unknown input (see Eq. (\ref{EquationSystemDefinitionUIOsimp})).
The rank of this matrix is defined as the unknown-input degree of reconstructability of the system from $\Lambda$,

\[
m=\uideg(\Lambda).
\]

In addition, we define the highest unknown input degree of reconstructability as the unknown input degree of reconstructability from the entire observability codistribution, i.e., 

\[
m_H=\uideg(\OBS).
\]

By construction, for any codistribution $\Lambda=\mathrm{span}\{\diff\lambda_1,\ldots,\diff\lambda_r\}$, $m\leq r$ and $m\leq \mw$.
Therefore, we can always extract $m$ scalar functions among
$\lambda_1,\ldots,\lambda_r$ and $m$ vectors among $g^1,\ldots,g^\mw$, such that the corresponding $m\times m$ submatrix has full rank.

This extraction is precisely the role of the operator

\[
\mathcal{S}(\Sigma,\Omega),
\]

appearing in Line~\ref{AlgoLineSelectionAndReordering}. The selected functions,

\[
\widetilde h_1,\ldots,\widetilde h_m,
\]

are called \emph{unknown input identifiers} (UIDs). 

Each UID induces a new quantity defined as
\begin{equation}\label{Equationvj}
v_j = \dot{\widetilde h}_j,
\end{equation}

which will replace the corresponding unknown input $w_j$ in the system representation. Hence, UID directions are not estimated as unknown inputs but are instead reformulated as input-like quantities.

At the same time, the operator
$\mathcal R(\Sigma,\widetilde h_1,\ldots,\widetilde h_m)$
reorders the unknown inputs so that the matrix

\[
\left[\Li_{g^j}\widetilde h_i\right]_{i,j=1}^{m}
\]

is nonsingular.

Once the UIDs have been selected, the matrices $\mu$ and $\nu=\mu^{-1}$ are computed according to

\begin{equation}\label{EquationMu}
\mu^\beta_\alpha=
\left[
\begin{array}{ll}
\dt{\Li}_{g^0}\widetilde h_\alpha,
& \beta=0,\\
\Li_{g^\beta}\widetilde h_\alpha,
& \beta=1,\ldots,m,
\end{array}
\right.~~~\alpha=0,1,\ldots,m,
\end{equation}

where

\[
\dt{\Li}_{g^0}\triangleq
\Li_{g^0}+\frac{\partial}{\partial t},
\]
and we set\footnote{Setting $\tilde{h}_0 = t$ reflects a deeper conceptual point that can be explained as follows. Time influences a dynamical system in two distinct ways. On one hand, it behaves like an uncontrollable input  --  a property it shares with unknown inputs. On the other hand, it also acts as an output, since sensors typically provide both the measurement and the time at which it is taken. A powerful approach to modeling dynamical systems is to treat time as an unknown input. However, since time is actually known, we already possess a corresponding UI identifier  --  namely, \( t \) itself. Therefore, the choice of setting \( \tilde{h}_0 = t \) is both valid and conceptually elegant.} ~~$\tih_0\triangleq t$.

These quantities are then used to construct the vector fields
$\widehat g^\alpha$ defined in (\ref{Equationgalpham}), as indicated in Line~\ref{AlgoLineMuNuGiat}.

\begin{equation}\label{Equationgalpham}
   \giat^\alpha  = 
   \left[
   \begin{array}{ll}
   \sum_{\beta=0}^m~\nu^{\alpha}_{\beta} ~g^{\beta} & \alpha=0,1,\ldots,m\\
   g^\alpha-\sum_{j=1}^m\sum_{l=1}^m~\nu^j_{l}g^l\Li_{g^\alpha}\tih_j & \alpha=m+1,..,\mw\\
   \end{array}
   \right.
\end{equation}

%
%
%

The next step consists in computing the smallest codistribution containing $\Omega$ that is invariant under

\[
\dt{\Li}_{\widehat g^0},
\Li_{\widehat g^1},
\ldots,
\Li_{\widehat g^m},
\]

namely

\[
\Omega_*=
\left<
\dt{\Li}_{\widehat g^0},
\Li_{\widehat g^1},
\ldots,
\Li_{\widehat g^m}
\left|
\Omega
\right.
\right>,
\]

as shown in Line~\ref{AlgoLineOmegaStar}.

The convergence test is performed by evaluating whether every remaining unknown-input vector field
$\giat^k$, $k=m+1,\ldots,\mw$, belongs to the orthogonal complement of $\Omega_*$.
If this condition is satisfied, the recursion terminates. In this case,

\[
\OBS=\Omega_*,
\]

and the current value of $m$ coincides with the highest unknown-input degree of reconstructability,

\[
m_H=m.
\]

Otherwise, at least one additional unknown-input direction admits a UID representation, and can therefore be associated with an observable input of the form $v_j = \dot{\widetilde h}_j$.
Specifically, this is obtained by executing Lines~\ref{AlgoLineConditionOmegaStarNo}--\ref{AlgoLineReset}. A new scalar quantity $\tit$ is first constructed (see Eq.~(18) in \cite{TAC_UIO}), after which the system is extended according to the procedure described in Section~IV-H of \cite{TAC_UIO}. Additional differentials of new observable scalar functions are then incorporated into $\Omega$.
Theorem~1 of \cite{TAC_UIO} proves that this extension necessarily increases the unknown-input degree of reconstructability by one. Consequently, each iteration produces at least one additional UID. Since the number of unknown inputs is finite, the recursion always terminates after a finite number of iterations.

The algorithm may converge before all unknown-input directions admit a UID representation, yielding $m_H < \mw$. In this case, only $m_H$ unknown-input directions are replaced by the observable inputs of the form $v_j = \dot{\widetilde h}_j$, while the remaining directions do not admit a UID representation. As shown in Section \ref{SectionUIDNormalForm}, these remaining directions do not affect the observable subsystem.

\section{The UID-Induced Normal Form}
\label{SectionUIDNormalForm}
This section shows that Algorithm~\ref{AlgoFull} does not merely compute the observability codistribution, but also provides the structural ingredients for a canonical representation of nonlinear systems driven by unknown inputs.
We analyze the properties of this representation and show that the UID-induced normal form is obtained by an invertible transformation of the unknown-input space defined by the selected unknown input identifiers (UIDs).
In this representation, the information carried by the unknown inputs is decomposed, through the observability structure, into components that are structurally decoupled from the observable dynamics and components that are observable and reconstructible.
This decomposition provides a unified structural foundation for unknown-input decoupling and unknown-input reconstruction, showing that both problems arise from the same observability-based representation.

\subsection{From Algorithm~1 to a Structural Representation}
\label{SubSectionUIDNormalForm}

Given a nonlinear system $\Sigma$, Algorithm~\ref{AlgoFull} computes both the observable codistribution $\OBS$ and the integer $m_H$, which characterize the observable structure and the number of unknown-input directions affecting the observable dynamics.
These quantities are intrinsic structural properties of the system. Although their computation depends on the chosen state coordinates, the resulting codistribution is a geometric object and therefore represents the same observability properties under any smooth change of coordinates. Likewise, the value of $m_H$ is independent of the choice of coordinates.

We henceforth refer to all quantities generated by Algorithm~\ref{AlgoFull} at convergence. In particular, $m=m_H$, and the matrix $\mu$ in~\eqref{EquationMu} has dimension $(1+m_H)\times(1+m_H)$.

Let
\[
\tih=[\tih_1,\ldots,\tih_{m_H}]
\]
be an admissible selection of unknown input identifiers (UIDs). Such a selection is, in general, not unique. One admissible choice is automatically generated by Algorithm~\ref{AlgoFull}, although any admissible selection may be adopted. Specifically, Algorithm~\ref{AlgoFull} recursively selects the UIDs under the sole requirement that the matrix $\mu$ in~\eqref{EquationMu} remains nonsingular at each step.

Once an admissible UID selection $\tih$ has been fixed, the corresponding UID-induced normal form is uniquely determined by that choice. We denote this representation by

\[
\uidnf.
\]

Using the definitions of $\mu$ (see Eq.~(\ref{EquationMu})), $\nu=\mu^{-1}$, and the vector fields
$\widehat g^\alpha$ (see Eq.~(\ref{Equationgalpham})), a straightforward calculation yields the following
fundamental identity:

\begin{equation}\label{EquationReplacing}
g^0
+
\displaystyle\sum_{j=1}^{\mw}
g^j\,w_j
=
\giat^0
+
\displaystyle\sumj
\giat^j\,v_j
+\sumk\giat^k w^k.
\end{equation}

By exploiting this identity upon convergence of Algorithm~\ref{AlgoFull} (i.e., when $m=m_H$), the system dynamics in the UID-induced normal form are given by

\begin{equation}\label{EquationUIDNF}
\dot{x}
=
\giat^0
+
\sum_{j=1}^{m_H}\giat^j v_j
+
\sum_{k=m_H+1}^{\mw}\giat^k w_k,
\end{equation}

where the unknown-input directions associated with the selected UIDs are represented through the new coordinates
$v_1,\ldots,v_{m_H}$ defined in~\eqref{Equationvj}.
It is important to emphasize that this replacement does not simply relabel the first $m_H$ unknown inputs. The quantities $v_1,\ldots,v_{m_H}$ are instead defined by the following invertible relations (Equation~(\ref{Equationvj})):
\begin{equation}\label{Equationvi}
v_\alpha~=~\dt{\tih}_\alpha~=~
\frac{\partial\tih_\alpha}{\partial x}\cdot
\left(
g^0+\sum_{j=1}^{\mw} g^j~w_j
\right)~+ \frac{\partial\tih_\alpha}{\partial t}=
\end{equation}
\[
\sum_{\beta=0}^{m_H} \mu^\beta_\alpha~w_\beta+\sum_{k=m_H+1}^{\mw} \Li_{g^k}\tih_\alpha~w_k,
\]
where we used the dynamics in (\ref{EquationSystemDefinitionUIOsimp}), 
$\mu$ is given in (\ref{EquationMu}), and we set $v_0=w_0=1$. Hence, each $v_\alpha$ generally depends on all the original unknown inputs rather than only on $w_1,\ldots,w_{m_H}$. Therefore, the transformation defining the UID-induced normal form is a coordinate transformation in the unknown-input space rather than a relabeling of a subset of the original inputs:
\[
(w_1,\ldots,w_\mw)
\quad\longrightarrow\quad
(v_1,\ldots,v_{m_H},w_{m_H+1},\ldots,w_{\mw}),
\]
where only the last $\mw-m_H$ coordinates remain unchanged. The structural properties established in the following subsection therefore refer to this transformed representation rather than to the original unknown inputs.

The UID-induced normal form is therefore not an observer-dependent construction nor an ad hoc coordinate transformation.
Rather, it is a structural representation obtained from the unknown-input observability analysis and uniquely determined once an admissible set of unknown input identifiers has been selected.
Different admissible UID selections may lead to different coordinate representations of the unknown-input space; however, all such representations are equivalent with respect to the observable dynamics, as will be established through the results of Theorems~\ref{PropertyIrrilevance} and~\ref{PropertyReconstructability} in Section \ref{SubSectionUIDNFProperties}.
%

\subsection{Properties of $\uidnf$}
\label{SubSectionUIDNFProperties}

\subsubsection{Irrelevance of $w_k$, $k>m_H$:}

We show that, when the system is expressed in the UID-induced normal form, the variables $w_k$, 
$k>m_H$, no longer appear explicitly in the evolution of observable quantities. Their influence is 
entirely encoded in the variables $v_1,\ldots,v_{m_H}$. It is important to emphasize that these 
inputs correspond to the original unknown inputs only up to the reordering determined by the selection 
procedure $\mathcal{R}(\Sigma,\tih)$ in line~\ref{AlgoLineSelectionAndReordering}; hence, their 
indexing does not necessarily match the original ordering.

We establish the following fundamental result.

\begin{theo}\label{PropertyIrrilevance}
Consider the system expressed in the UID-induced normal form~\eqref{EquationUIDNF}.
Then, for every smooth function $\phi=\phi(x,t)$ whose differential belongs to the
observable codistribution, i.e., $\diff\phi\in\OBS$, the explicit dependence of
$\dot{\phi}$ on the variables $w_k$, $k>m_H$, vanishes. Equivalently, the time
derivative of every observable function depends only on the known inputs and on
the variables $v_1,\ldots,v_{m_H}$.
\end{theo}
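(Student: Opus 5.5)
The plan is to compute $\dot\phi$ directly along the normal-form dynamics~\eqref{EquationUIDNF} and then read off the coefficient of each $w_k$, $k>m_H$, from the termination test of Algorithm~\ref{AlgoFull}. For a smooth $\phi=\phi(x,t)$ the chain rule gives
\[
\dot\phi=\dt{\Li}_{\giat^0}\phi+\sum_{j=1}^{m_H}v_j\,\Li_{\giat^j}\phi+\sum_{k=m_H+1}^{\mw}w_k\,\Li_{\giat^k}\phi .
\]
The dependence of $\dot\phi$ on $w_k$, $k>m_H$, is therefore carried entirely by the scalar coefficients $\Li_{\giat^k}\phi=\langle\diff\phi,\giat^k\rangle$. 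The claim thus reduces to showing that $\langle\diff\phi,\giat^k\rangle=0$ for every $\diff\phi\in\OBS$ and every $k>m_H$.

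This is exactly what the convergence condition of Algorithm~\ref{AlgoFull} supplies. There are two exit points to treat.

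\emph{Exit through Line~\ref{AlgoLineConditionOmegaStarReturn}.} Here $\OBS=\Omega_*$. The return branch is reached only when the test in Line~\ref{AlgoLineConditionOmegaStarIf} fails, so $\giat^k\in\Omega_*^\bot=\OBS^\bot$ for all $m_H+1\le k\le\mw$. Since $\OBS$ is a codistribution, pairing a generic element $\sum_i c_i\,\diff\lambda_i$ of it with $\giat^k$ gives zero pointwise. In particular this holds for any $\diff\phi\in\OBS$, whether $\phi$ is a generator or a function whose differential is a combination of the generators.

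\emph{Exit through Line~\ref{AlgoLineConditionCanonicIf}.} Here $m_H=\mw$, so no indices $k>m_H$ exist and the statement is vacuous.

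It then remains to confirm that~\eqref{EquationUIDNF} is a legitimate rewriting of the dynamics, so that the $\dot\phi$ computed above is the true time derivative along system trajectories. This follows from the identity~\eqref{EquationReplacing}, which is stated earlier in the paper and may be assumed. Combined with the invertibility of $w\mapsto(v,w_{m_H+1},\ldots,w_\mw)$ given by~\eqref{Equationvi} (with $\mu$ nonsingular), it shows that any admissible $w$ produces the same $\dot x$ as the corresponding $(v,w_k)$. With the coefficients of $w_k$ vanishing, $\dot\phi=\dt{\Li}_{\giat^0}\phi+\sum_j v_j\Li_{\giat^j}\phi$. This expression depends only on $x$, $t$, the known inputs (through $\giat^0$ and $\mu$) and the $v_j$, which is the equivalent formulation in the statement.

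The main obstacle is a bookkeeping issue. If the algorithm has performed extensions $\UIE$ (Lines~\ref{AlgoLineConditionOmegaStarNo}--\ref{AlgoLineReset}), the state has been augmented, and the $\giat^k$ and $\OBS$ live on the extended system. One must check that the normal form~\eqref{EquationUIDNF} and the test $\giat^k\in\OBS^\bot$ refer to the same final system, namely the one at convergence. One must also check that any $\phi(x,t)$ on the original state, viewed on the extended manifold, has differential pairing with $\giat^k$ in the same way. I would handle this by stating explicitly that all objects are taken at convergence, as the paper fixes in Section~\ref{SubSectionUIDNormalForm}, and by noting that the pairing is coordinate-free.
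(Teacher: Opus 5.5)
Your proof is correct and follows the paper's own argument: the paper also expands $\dot\phi$ along \eqref{EquationUIDNF} and removes the $w_k$ terms through a preliminary lemma. That lemma reads off $\giat^k\in\Omega_*^\bot=\OBS^\bot$ from the failed test in Line~\ref{AlgoLineConditionOmegaStarIf}, and treats the other exit, where $m_H=\mw$, as vacuous. Your extra checks go beyond the paper's proof, which does not spell them out, and are harmless refinements: that \eqref{EquationUIDNF} legitimately rewrites the dynamics via \eqref{EquationReplacing} and an invertible input map, and that all objects live on the final, possibly augmented, system.
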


\begin{proof}
The proof is given in Appendix \ref{AppendixIrrilevance}.
\end{proof}

\begin{rem}
The previous property is specific to the UID-induced normal form.
Indeed, the variables $v_1,\ldots,v_{m_H}$ are defined by the invertible transformation~\eqref{Equationvi} and generally depend on all the original unknown inputs, including
$w_k$, $k>m_H$.
Consequently, the theorem does not state that these original unknown inputs have no influence on the observable dynamics. Rather, it states that, after rewriting the system in the UID-induced normal form, their influence is entirely captured by the variables
$v_1,\ldots,v_{m_H}$, while the variables
$w_k$, $k>m_H$, no longer appear explicitly in the evolution of observable quantities.
\end{rem}

This result provides a complete structural characterization of unknown-input decoupling for the general class of nonlinear systems considered in this paper. Classical approaches typically derive sufficient conditions for constructing observers whose estimation error is insensitive to unknown inputs, or address only restricted classes of nonlinear systems. In contrast, Theorem~\ref{PropertyIrrilevance} establishes an intrinsic property of the system itself. Specifically, once the system is expressed in the UID-induced normal form~\eqref{EquationUIDNF}, every unknown-input direction represented by the variables $w_k$, $k>m_H$, is structurally decoupled from the evolution of all observable quantities. Equivalently, every unknown-input direction that influences the observable dynamics is necessarily represented through the observable quantities $v_j$.
Accordingly, every unknown-input direction that can be structurally eliminated from the observable dynamics has already been eliminated by the UID-induced normal form. The decomposition obtained from the UID-induced normal form is therefore complete with respect to the observable dynamics: the evolution of any observable function depends exclusively on the known inputs and on the observable quantities $v_j$, while the remaining unknown-input directions have no influence on the evolution of any observable quantity.
The UID-induced normal form therefore provides the explicit structural representation in which this decomposition is realized, revealing the intrinsic decoupling structure encoded in the observable codistribution.
Rather than introducing additional assumptions on the unknown inputs or relying on observer-specific constructions, the decoupling structure emerges directly from the observable codistribution through the observability analysis itself.

\subsubsection{Observability of $v$:}

We establish the following property.

\begin{theo}\label{PropertyReconstructability}
The quantities $v_1,\ldots,v_{m_H}$ are reconstructible from the system outputs and their time derivatives.
\end{theo}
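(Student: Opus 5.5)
The key fact is that each UID $\tih_j$, $j=1,\ldots,m_H$, is observable, i.e.\ $\diff\tih_j\in\OBS$. Given this, each $v_j=\dot{\tih}_j$ is the time derivative of an observable function. It is therefore recovered as soon as $\tih_j$ is available along the trajectory as a function of the outputs and finitely many of their derivatives. The plan follows directly from this.

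\emph{Step 1: the UIDs are observable.} By construction in Line~\ref{AlgoLineSelectionAndReordering}, the operator $\mathcal{S}(\Sigma,\Omega)$ selects $\tih_1,\ldots,\tih_m$ among the generators $\lambda_1,\ldots,\lambda_r$ of the current $\Omega$. Every step of Algorithm~\ref{AlgoFull} only enlarges $\Omega$: Line~\ref{AlgoLineReset} adds differentials, and $\Omega_*\supseteq\Omega$. Hence at convergence $\diff\tih_j\in\Omega\subseteq\OBS$ for all $j\le m_H$. The UIDs added after a system extension $\UIE$ require a brief check. In that case $\Omega$ is augmented with $\diff\tit$ and with the differentials $\diff v_j^{(\alpha_j)}$. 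These functions are built from Lie derivatives of observable functions along the $\giat$ fields, so they stay in the observable codistribution of the extended system, which is what $\OBS$ refers to. I would invoke the correctness result of \cite{TAC_UIO} (Theorem~1 and the convergence proof) to assert that $\OBS$ is the observable codistribution in the passive sense. Every function whose differential lies in it is then locally a function of $y$, $u$ and finitely many time derivatives of $y$ along the actual trajectory.

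\emph{Step 2: differentiate.} Since $\tih_j(x,t)=\Phi_j(y,\dot y,\ldots,y^{(N)},u,\ldots,t)$ locally, differentiating in time gives
\[
v_j=\dot{\tih}_j=\frac{d}{dt}\Phi_j\bigl(y,\ldots,y^{(N)},u,\ldots,t\bigr),
\]
which involves derivatives of $y$ only up to order $N+1$. This also agrees with Theorem~\ref{PropertyIrrilevance}: $\dot{\tih}_j$ has no explicit $w_k$, $k>m_H$, dependence in the normal form. Its expression is $\Li_{\giat^0}\tih_j+\sum_i v_i\Li_{\giat^i}\tih_j=v_j$, by the identity $\Li_{\giat^i}\tih_j=\delta^i_j$. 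That identity follows from $\nu=\mu^{-1}$ and \eqref{Equationgalpham}, and I would verify it to show the normal form is consistent.

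\emph{Main obstacle.} The delicate point is Step 1 for the UIDs produced in later iterations, after $\UIE$ has extended the state with $\tit$ and the auxiliary variables $v_j^{(\alpha_j)}$. One must argue that observability in the extended system transfers back to reconstructibility from the original outputs. Otherwise the functions could depend on the artificial states that were introduced. My first attempt is by induction on the iterations. The added states are themselves derivatives of previously identified UIDs, hence reconstructible by the induction hypothesis. So any function observable in the extended system is still expressible through $y$ and its derivatives, at the cost of increasing the differentiation order $N$.
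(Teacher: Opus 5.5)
Your proposal is correct and follows the same route as the paper's proof: $v_j=\dot{\tih}_j$, and each $\tih_j$ satisfies $\diff\tih_j\in\OBS$, so it is a function of the outputs and finitely many of their derivatives, and hence so is its time derivative. The paper simply asserts both $\diff\tih_j\in\OBS$ and the step from observability to reconstructibility. Your justification via the monotone growth of $\Omega$, and your induction over the $\UIE$ extensions, spell out details the paper leaves implicit without changing the argument.
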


\begin{proof}
The proof is given in Appendix \ref{AppendixReconstructability}.
\end{proof}

This result shows that the quantities $v_j$ are not independent unknown inputs in the UID-induced normal form. Instead, they are observable signals generated recursively by the observability analysis and correspond to the time derivatives of the observable quantities $\tilde h_j$.

Theorem~\ref{PropertyReconstructability} also provides a structural interpretation of unknown-input reconstruction. Existing approaches typically seek to recover unknown inputs from output measurements and their time derivatives through problem-dependent algebraic manipulations or recursive differentiation procedures, without a general criterion identifying which unknown-input information is actually relevant to state estimation. By contrast, the UID-induced normal form determines this information directly from the observability structure of the system. It identifies the complete unknown-input information that necessarily remains coupled to the observable dynamics and represents it through the observable quantities $v_1,\ldots,v_{m_H}$ defined in~\eqref{Equationvi}.

Moreover, Theorem~\ref{PropertyReconstructability} guarantees that these quantities are always reconstructible from the system outputs and their time derivatives, independently of whether the complete system state is observable. Consequently, the UID-induced normal form not only identifies exactly which unknown-input information is fundamentally required for the evolution of the observable subsystem, but also provides a constructive and systematic mechanism for obtaining that information. Unknown-input reconstruction therefore emerges as an intrinsic consequence of the observability analysis itself, rather than as a separate problem requiring ad hoc reconstruction procedures.

\subsection{General implications for state estimation}

The structural properties established in the previous subsection are independent of the particular state estimation methodology adopted. They follow directly from the UID-induced normal form and therefore apply to any state estimation framework, including recursive filtering methods (e.g., Extended Kalman Filters, Unscented Kalman Filters, particle filters), moving-horizon estimation, smoothing techniques, and batch optimization methods.
In other words, the UID-induced normal form characterizes which unknown inputs are structurally irrelevant for estimating the observable component of the state and which ones can instead be replaced by observable dynamical quantities. These properties hold irrespective of how the available measurements are processed by the estimation algorithm.

In the remainder of this subsection we discuss these general implications. Section~V then specializes these results to recursive state estimation by illustrating how the UID-induced normal form can be exploited within a recursive filtering framework.

Theorem~\ref{PropertyIrrilevance} establishes that the unknown inputs $w_k$, for $k>m_H$, do not affect the evolution of any observable quantity. As a consequence, these inputs are structurally irrelevant for state estimation and can be set to zero without loss of generality. Under this simplification, the dynamics of the UID-induced normal form~\eqref{EquationUIDNF} reduce to
\begin{equation}
\label{EquationUIDNFsimplified}
\dot{x}
=
\giat^0
+
\sum_{j=1}^{m_H}\giat^j v_j.
\end{equation}

Theorem~\ref{PropertyReconstructability} further implies that the variables $v_1,\ldots,v_{m_H}$ are reconstructible from the system outputs and their time derivatives. Therefore, the effective state estimation problem can be reformulated in terms of the augmented state $[x^\top, v^\top]^\top$, where the unknown inputs that influence the observable dynamics are replaced by observable quantities.

It is important to emphasize a structural distinction that arises in this framework. If the full state $x$ is observable, then, by Theorem~\ref{PropertyIrrilevance}, the vector fields $\giat^k$, for $k>m_H$, directly vanish.

In the general case, however, the state $x$ may not be fully observable. This can be directly inferred from the dimension of the observable codistribution $\OBS$. When $\dim(\OBS)$ is strictly smaller than the state dimension, a reduced observable substate should be identified. In principle, any basis of $\OBS$ defines such a substate. However, the corresponding basis functions may have highly nonlinear and non-structured expressions in the original coordinates, which makes it difficult to explicitly rewrite the system dynamics in a form analogous to~\eqref{EquationSystemDefinitionUIOsimp} or~\eqref{EqGeneralSystem}, where all quantities must be expressed solely in terms of the chosen substate. Although such a representation is always theoretically possible, no general constructive procedure exists for obtaining it in practice.

If such a representation can be derived, Algorithm~\ref{AlgoFull} can then be applied to the reduced system, leading to a corresponding UID-induced normal form once an admissible UID selection has been specified.

A further practical difficulty concerns the selection of the UIDs. While, from an observability perspective, any admissible selection is equivalent as long as the matrix $\mu$ remains nonsingular, the choice has in general an impact on state estimation performance. In particular, different UID selections may lead to different numerical conditioning, different sensitivity to noise, and different estimation accuracy for different components of the state. This suggests that the UID selection problem is inherently tied to the estimation method and the specific performance criteria of interest. We believe that no general, fully automated solution to this selection problem exists. It is worth noting that even in the classical setting without unknown inputs, the problem of identifying optimal coordinates for state estimation does not admit a general solution.

The discussion above is intentionally independent of any specific state estimation algorithm. The UID-induced normal form should be regarded as a structural representation of the original system that explicitly separates the unknown inputs that are irrelevant to the observable dynamics from those that can be replaced by observable dynamical quantities. Once this representation has been obtained, any state estimation methodology can be applied to the resulting system.

In the next sections we focus on recursive state estimation. We illustrate how the UID-induced normal form can be exploited within a recursive filtering framework through the proposed Structure-from-Motion application.

\section{Recursive State Estimation Based on the UID-Induced Normal Form}
\label{SectionRecursiveEstimation}

The structural results established in the previous section naturally suggest a general framework for recursive state estimation.
In this section, we illustrate this framework through an Extended Kalman Filter (EKF), adopted as a representative recursive state estimation algorithm.
The objective is to show how the UID-induced normal form can be directly incorporated into a standard recursive filtering scheme.

\subsection{Prediction model}

The prediction model derived from the UID-induced normal form assumes the availability of the increments of the unknown input identifiers,
\[
\Delta \tih_j,\qquad j=1,\ldots,m_H.
\]

Indeed, from~\eqref{EquationUIDNFsimplified}, the state propagation over the interval $\Delta t$ is given by

\begin{equation}
\label{EquationDeltaX}
\Delta x
=
\giat^{0}\Delta t
+
\sum_{j=1}^{m_H}\giat^{j}\Delta\tih_j.
\end{equation}

It is important to emphasize that the quantities
$\tih_j$
are observable, but are not necessarily directly measured outputs.
In the general case, each UID can be expressed as a function of the outputs and of a finite number of their time derivatives.
Consequently, before applying the above prediction model, the selected UIDs must first be reconstructed from the available measurements.
This reconstruction problem is conceptually independent of the UID-induced normal form.
The latter provides the structural prediction model, whereas the computation of the UIDs can be carried out by any suitable output-differentiation or state estimation technique.
Different reconstruction methods can therefore be combined with the same UID-induced normal form without modifying the resulting prediction equations.
In this paper we restrict our attention to the important class of systems for which the selected UIDs are directly measured outputs.
In this case, the increments
$\Delta\tih_j$
are immediately available from consecutive measurements, and the prediction model can be implemented directly.
This assumption is satisfied by the Structure-from-Motion problem considered in the next section and allows us to focus on the implications of the UID-induced normal form for recursive state estimation without introducing an additional UID reconstruction layer.

\subsection{UID selection}
\label{SubSectionSelectionEuristic}

The recursive estimation framework described in this section should be regarded as heuristic.
This is consistent with the nature of the Extended Kalman Filter itself, which is a heuristic extension of the Kalman filter to nonlinear systems.
Consequently, no general optimality criterion can be expected for selecting the unknown input identifiers.

Within the EKF framework, the choice of the UIDs determines how the available output measurements are exploited.
The selected UIDs are employed during the prediction step through the UID-induced normal form, while the remaining outputs are naturally available for the measurement update.
At the same time, the selected UIDs determine the subset of unknown-input directions that is replaced by the observable quantities
$v_j=\dot{\tih}_j$,
whereas the remaining unknown-input directions are discarded according to Theorem~\ref{PropertyIrrilevance}.
The only structural requirement is that the corresponding matrix $\mu$ remains nonsingular.

As a practical heuristic, among all admissible UID selections we choose the one that maximizes the absolute value of
$\det(\mu)$,
thereby favoring numerically well-conditioned transformations.
This criterion implicitly assumes that all output measurements are affected by comparable noise levels.
If different outputs exhibit significantly different noise characteristics, a weighted criterion could be adopted instead.

If the number of admissible selections,
\[
\binom{K}{m_H},
\]
is sufficiently small, where $K$ denotes the number of candidates, the optimum is obtained by exhaustive search.
Otherwise, we employ a simple greedy strategy.
The procedure is initialized by selecting the rows of $\mu$ through a QR factorization with column pivoting.
It then iteratively attempts to replace one selected row with one non-selected row whenever this increases
$|\det(\mu)|$.
The process terminates when no improving swap can be found.

It is worth emphasizing that the UID selection is generally state dependent.
Indeed, all the quantities defining the matrix $\mu$ ultimately depend on the system state through the vector and scalar fields appearing in~\eqref{EquationSystemDefinitionUIOsimp}.
As a consequence, the UID set that maximizes $|\det(\mu)|$ may vary as the state evolves.
Accordingly, the UID selection procedure is repeated at every prediction step, allowing the filter to adapt its structural representation to the current operating conditions.

\subsection{Recursive filtering}\label{SubSectionRecursiveFiltering}

Once an admissible UID set has been selected, the prediction and correction models are completely specified.
In particular, the selected UIDs determine the partition of the available measurements into the subset
$\tiz$,
used by the prediction model, and the complementary subset
$\bar z$,
used during the correction step.

The Extended Kalman Filter can therefore be implemented in the standard prediction--correction form.
The only quantities that differ from a conventional EKF are those determined by the UID-induced normal form and the selected UID set, as detailed below.

\subsubsection{State-transition Jacobian:}

The state-transition Jacobian propagates the state-estimation error through the prediction step.
Denoting it by $F_x$, from~\eqref{EquationDeltaX} we obtain

\[
F_x
=
I_n
+
\frac{\partial\giat^0}{\partial x}\Delta t
+
\sum_{j=1}^{m_H}
\frac{\partial\giat^j}{\partial x}
\Delta\tiz_j,
\]

where $I_n$ denotes the identity matrix of dimension equal to the state dimension, and
$\Delta\tiz_j$
is the difference between two consecutive measurements of the output selected as the $j$-th UID.

\subsubsection{Input Jacobian:}

The input Jacobian propagates the uncertainty associated with the inputs through the prediction model.
When known control inputs are present, this uncertainty includes both the unknown-input increments introduced by the UID-induced normal form and the uncertainty affecting the measured control inputs.

The corresponding Jacobian is

\[
F_u
=
\left[
\giat^1,\,
\ldots,\,
\giat^{m_H},
\;
\frac{\partial\giat^0}{\partial u}\Delta t
+
\sum_{j=1}^{m_H}
\frac{\partial\giat^j}{\partial u}
\Delta\tiz_j
\right].
\]

The first $m_H$ columns correspond to the uncertainty associated with the UID increments, whereas the remaining columns account for the uncertainty affecting the known control inputs.

\subsubsection{Process noise covariance:}

Assuming that the uncertainty affecting the known control inputs is statistically independent of the measurement uncertainty associated with the outputs, the process noise covariance matrix is given by

\[
Q
=
\begin{bmatrix}
Q_{\Delta\tiz} & 0\\
0 & Q_u
\end{bmatrix},
\]

where
$Q_{\Delta\tiz}$
denotes the covariance matrix associated with the uncertainty on the UID increments
$\Delta\tiz$,
and
$Q_u$
is the covariance matrix associated with the known control inputs.

Furthermore, if the measurement errors are temporally uncorrelated, the covariance of the measurement increments satisfies

\[
Q_{\Delta\tiz}
=
2Q_{\tiz},
\]

where
$Q_{\tiz}$
is the covariance matrix of the selected output measurements.

\subsubsection{Measurement Jacobian and measurement noise covariance:}

The correction step is identical to that of a standard EKF, except that it only exploits the output measurements that have not been selected as UIDs.
Accordingly, the original observation function is partitioned as

\[
h(x)=
\begin{bmatrix}
\tih(x)\\
\bar h(x)
\end{bmatrix},
\]

where
$\tih$
contains the selected UIDs and
$\bar h$
contains the remaining outputs.
The measurement Jacobian is therefore

\[
H
=
\frac{\partial\bar h}{\partial x},
\]

while the corresponding measurement noise covariance is simply the covariance matrix
$R_{\bar z}$
associated with the measurements
$\bar z$.

\section{Structure-from-Motion as a Nonlinear Unknown-Input Estimation Problem}
\label{SectionSfM}

This section applies the proposed UID-induced normal form framework to a minimal Structure-from-Motion (SfM) problem. 
The objective is to formulate the SfM problem as a nonlinear state estimation problem in the presence of unknown inputs, where the unknown inputs correspond to the camera motion components that are not directly measured.

Consider a monocular camera observing \(K\) static point features whose positions are fixed in the global reference frame. 
The camera motion is characterized by a linear velocity and an angular velocity expressed in the camera local frame. 
When gyroscopic measurements are not available, all six motion components are unknown inputs: three linear velocity components and three angular velocity components.
More generally, when only partial rotational information is available, the remaining motion components are treated as unknown inputs within the proposed framework.

\subsection{State representation}\label{SubSectionSfMState}

The state is expressed in the camera local reference frame. 
For each point feature, instead of directly considering its three-dimensional coordinates, we adopt the inverse-depth parametrization commonly used in monocular Visual SLAM \cite{Civera2008}. 
Specifically, each feature is represented by its bearing direction and inverse depth.

Let
\(
[F_x^i,F_y^i,F_z^i]^\top
\)
denote the coordinates of the \(i\)-th feature point expressed in the camera frame.
The corresponding unit bearing vector is defined as

\[
[u_x^i,u_y^i,u_z^i]^\top
=
\frac{1}{\sqrt{(F_x^i)^2+(F_y^i)^2+(F_z^i)^2}}
[F_x^i,F_y^i,F_z^i]^\top ,
\]

while the inverse depth is defined as

\[
\lambda^i
=
\frac{1}{\sqrt{(F_x^i)^2+(F_y^i)^2+(F_z^i)^2}} .
\]

The state vector is therefore chosen as

\begin{equation}
\label{EquationSfMState}
x=
\left[
u_x^1,u_y^1,u_z^1,\lambda^1,
\ldots,
u_x^K,u_y^K,u_z^K,\lambda^K
\right]^\top .
\end{equation}

The inverse-depth parametrization has been introduced in monocular Visual SLAM to improve the representation of feature uncertainty and to allow the initialization of landmarks with unknown depth \cite{Civera2008}. 

This representation is intentionally redundant. 
Indeed, the bearing vector does not contain independent quantities, since it satisfies the unit norm constraint. 
However, this choice leads to a particularly simple measurement model, because the monocular camera directly measures the bearing vectors associated with the observed feature points.
Moreover, the formulation does not introduce any additional sensing assumption.

It is worth emphasizing that, independently of the number of observed points \(K\), the absolute scale of the scene is not observable from monocular measurements alone. 
Consequently, the state representation above contains an intrinsically unobservable degree of freedom associated with the global scale factor. 

The measurement model is therefore given by

\begin{equation}
\label{EquationSfMOutput}
h(x)=
\left[
u_x^1,u_y^1,u_z^1,
\ldots,
u_x^K,u_y^K,u_z^K
\right]^\top .
\end{equation}

\subsection{Unknown-input system formulation}

For clarity, the dynamic equations are first derived for a single feature point, omitting the superscript \(i\).
The extension to \(K\) points is obtained by concatenating the corresponding dynamics.

Let

\[
V=
[V_x,V_y,V_z]^\top
\]

denote the camera linear velocity and

\[
\Omega=
[\Omega_x,\Omega_y,\Omega_z]^\top
\]

the camera angular velocity, both expressed in the camera frame.
The dynamics of the bearing vector are

{\small
\begin{align}
\label{EquationSfMDynamicsUx}
\dot{u}_x &=
\Omega_z u_y-\Omega_y u_z
-\lambda
\Big[
V_x-(u_xV_x+u_yV_y+u_zV_z)u_x
\Big],
\\[6pt]
\label{EquationSfMDynamicsUy}
\dot{u}_y &=
\Omega_x u_z-\Omega_z u_x
-\lambda
\Big[
V_y-(u_xV_x+u_yV_y+u_zV_z)u_y
\Big],
\\[6pt]
\label{EquationSfMDynamicsUz}
\dot{u}_z &=
\Omega_y u_x-\Omega_x u_y
-\lambda
\Big[
V_z-(u_xV_x+u_yV_y+u_zV_z)u_z
\Big].
\end{align}
}

The inverse depth dynamics is

\begin{equation}
\label{EquationSfMDynamicsLambda}
\dot{\lambda}
=
\lambda^2
(u_xV_x+u_yV_y+u_zV_z).
\end{equation}

Equations~\eqref{EquationSfMDynamicsUx}--\eqref{EquationSfMDynamicsLambda} can be directly expressed in the general form of
(\ref{EquationSystemDefinitionUIOsimp}).
The vector fields associated with the three linear velocity components are

\begin{equation}
\label{EquationSfMgV}
g^{V_x}
=
\begin{bmatrix}
-\lambda(1-u_x^2)\\
\lambda u_xu_y\\
\lambda u_xu_z\\
\lambda^2u_x
\end{bmatrix},
~
g^{V_y}
=
\begin{bmatrix}
\lambda u_xu_y\\
-\lambda(1-u_y^2)\\
\lambda u_yu_z\\
\lambda^2u_y
\end{bmatrix},
\end{equation}

\[
g^{V_z}
=
\begin{bmatrix}
\lambda u_xu_z\\
\lambda u_yu_z\\
-\lambda(1-u_z^2)\\
\lambda^2u_z
\end{bmatrix},
\]

while the vector fields associated with the angular velocity components are

\begin{equation}
\label{EquationSfMgOmegaX}
g^{\Omega_x}
=
\begin{bmatrix}
0\\
u_z\\
-u_y\\
0
\end{bmatrix},
~
g^{\Omega_y}
=
\begin{bmatrix}
-u_z\\
0\\
u_x\\
0
\end{bmatrix},
~
g^{\Omega_z}
=
\begin{bmatrix}
u_y\\
-u_x\\
0\\
0
\end{bmatrix}.
\end{equation}

For the complete SfM problem, the state vector is obtained by stacking the states associated with the \(K\) feature points, while the vector fields above are extended by placing the corresponding four-dimensional blocks in the components associated with each feature.

The resulting system always admits the representation
(\ref{EquationSystemDefinitionUIOsimp}).
The unknown inputs are selected among the six motion components

\[
\{V_x,V_y,V_z,\Omega_x,\Omega_y,\Omega_z\},
\]

depending on the available inertial measurements.
Each measured motion component is incorporated into the drift vector field \(g^0\), whereas each unmeasured component is treated as an unknown input multiplying its corresponding vector field.

For example, if all six motion components are treated as unknown inputs, no term contributes to the drift vector field. Hence, the drift vanishes and
\(
g^0(x)=\mathbf{0},
\) while the unknown-input vector fields are
\(
g^1=g^{V_x},\;
g^2=g^{V_y},\;
g^3=g^{V_z},\;
g^4=g^{\Omega_x},\;
g^5=g^{\Omega_y},\;
g^6=g^{\Omega_z}.
\)

If only \(\Omega_z\) is measured, then \(
g^0=\Omega_z\,g^{\Omega_z},\)
while the remaining unknown-input vector fields are
\(
g^1=g^{V_x},\;
g^2=g^{V_y},\;
g^3=g^{V_z},\;
g^4=g^{\Omega_x},\;
g^5=g^{\Omega_y}.
\)

Similarly, if the complete angular velocity is available, then
\(
g^0=
\Omega_x g^{\Omega_x}
+\Omega_y g^{\Omega_y}
+\Omega_z g^{\Omega_z},
\)
and only the three linear velocity components remain as unknown inputs,
\(
g^1=g^{V_x},\;
g^2=g^{V_y},\;
g^3=g^{V_z}.
\)

The remaining sensing configurations are obtained analogously by assigning every measured motion component to the drift vector field and every unmeasured component to the corresponding unknown-input vector field.

\subsection{Unknown-input observability analysis}
\label{SubSectionSfMObservability}

The observability properties of the SfM system are analyzed by applying Algorithm~\ref{AlgoFull} to the nonlinear system introduced above. 
The resulting observable codistribution \(\mathcal{O}\) provides a complete characterization of the state quantities that can be recovered from the available measurements in the presence of unknown motion components.

When \(K\geq4\), the algorithm shows that the state is observable up to the global scale factor even when all six camera motion components are treated as unknown inputs. 
Equivalently, the null space of the observable codistribution has dimension one, corresponding only to the inherent scale ambiguity of monocular vision. 
This result is consistent with classical Structure-from-Motion theory, where four or more point correspondences are sufficient for reconstruction in the absence of rotational measurements.

A different behavior occurs in the three-feature configuration. 
For \(K=3\), the system remains observable up to scale when at least one angular velocity component is measured, namely when a single-axis gyroscope is available. The corresponding minimal sensing configuration therefore consists of measuring a single angular velocity component while treating the remaining five camera motion components as unknown inputs. In this case, the observable codistribution again has a one-dimensional null space associated exclusively with the global scale ambiguity. Any additional motion measurements preserve this property.
Conversely, when no gyroscopic information is available and all six motion components are considered unknown inputs, the null space of \(\mathcal{O}\) has dimension larger than one (specifically, $\dim\left\{\OBS^\bot\right\}=3$). 
Therefore, additional ambiguities exist beyond the unavoidable scale factor, and the three-feature configuration is not sufficient for monocular reconstruction.

For \(K=2\), the observability analysis shows that complete rotational information is required in order to recover the state up to the intrinsic scale ambiguity. The corresponding minimal sensing configuration consists of measuring all three angular velocity components while treating the three linear velocity components as unknown inputs.

Finally, the analysis reveals an additional sensing configuration that is not usually emphasized in classical SfM formulations. 
If at least one component of the camera linear velocity is known, the state becomes fully observable, including the global scale factor, starting from \(K\geq3\). 
In this case, the scale ambiguity is removed because the additional velocity information provides an absolute metric constraint. The corresponding minimal sensing configuration consists of measuring a single component of the camera linear velocity, while treating the remaining five motion components as unknown inputs. Any additional motion measurements preserve full observability.
Moreover, full observability can also be achieved with only \(K=2\) point features, provided that no more than three motion components are treated as unknown inputs. The minimal sensing configurations correspond to measuring either one linear-velocity component and two angular-velocity components, two linear-velocity components and one angular-velocity component, or all three linear-velocity components. Any additional motion measurements preserve full observability.
Finally, full observability can also be achieved with a single point feature (\(K=1\)), provided that no more than one motion component is treated as an unknown input. The corresponding minimal sensing configuration therefore consists of measuring five of the six camera motion components.

These results are obtained directly from the unknown-input observability analysis and do not require any prior assumption on the evolution of the unmeasured camera motion components.
The complete symbolic computation of the observable codistribution is performed using the MATLAB implementation of Algorithm~\ref{AlgoFull} provided in \cite{TUIO}\footnote{As already pointed out, the state representation in (\ref{EquationSfMState}) is redundant, since each bearing vector satisfies the unit-norm constraint. Consequently, when using the MATLAB implementation of Algorithm~\ref{AlgoFull}, this constraint must also be enforced during the numerical initialization phase (implemented in the routine \texttt{set\_tool.m}), where random numerical values are assigned to the symbolic state variables for rank and determinant evaluations. More generally, whenever the algorithm is applied using a redundant state parametrization, all algebraic constraints among the state variables must be taken into account during this initialization. Otherwise, numerical rank or determinant computations may lead to incorrect conclusions.}.

\subsection{Construction of the UID-Induced Normal Form and EKF Implementation}
\label{SubSectionSfMUIDNormalForm}

The quantities generated by Algorithm~\ref{AlgoFull} are now used to construct the UID-induced normal form introduced in Section~\ref{SectionRecursiveEstimation}.
As discussed in Section~\ref{SubSectionSelectionEuristic}, when multiple admissible UID choices exist, the UIDs are selected by maximizing the absolute value of the determinant of the matrix \(\mu\) during the UID selection procedure.
An important property of the proposed SfM formulation is that the selected UIDs can be chosen directly among the available outputs.
This property considerably simplifies the practical implementation of the UID-induced normal form, since the quantities replacing the unknown inputs that affect the observable dynamics are directly available from the measurements.
Once the UIDs have been selected, the UID-induced normal form is constructed.

The recursive state estimator is then implemented using the EKF formulation described in Section~\ref{SubSectionRecursiveFiltering}.

The numerical evaluation of the proposed approach, including the minimal three-feature configuration with a single-axis gyroscope and the more general sensing configurations discussed in Section~\ref{SubSectionSfMObservability}, is presented in the following section.

\section{Experimental Validation}
\label{SectionExperimentalValidation}

This section experimentally validates the practical implications of the proposed UID-induced normal form through the Structure-from-Motion problem defined in Section~\ref{SectionSfM}. Specifically, the EKF formulation derived from the UID-induced normal form is evaluated on real-world data from the EuRoC MAV dataset~\cite{Burri2016EuRoC},

The experimental evaluation focuses on sensing configurations based on a monocular camera and angular velocity measurements. 
Although the employed inertial measurement unit also provides linear acceleration measurements, only gyroscopic information is considered in the experiments. 
Including accelerometer measurements would lead to a different class of SfM formulations, since linear accelerations are coupled with gravity, thereby introducing a dependence on the absolute orientation of the sensor. This coupling concerns the orientation components associated with the gravity direction, while the yaw component remains unobservable from accelerometer measurements alone.
While such configurations could also be investigated within the proposed framework, their analysis would require a dedicated treatment and would introduce a different application. 

The experiments investigate the sensing configurations predicted by the nonlinear unknown-input observability analysis presented in Section~\ref{SubSectionSfMObservability}. 
In particular, we consider configurations obtained by varying the number of observed point features and the available gyroscopic information, including the minimal three-feature case with a single-axis gyroscope and the purely vision-based case in which all six camera motion components are treated as unknown inputs.

\subsection{Dataset and Experimental Setup}
\label{SubSectionDatasetSetup}

The experiments are performed on the EuRoC MAV dataset~\cite{Burri2016EuRoC}, 
which provides synchronized camera images, inertial measurements, and ground-truth trajectories.
The dataset is used to evaluate the proposed estimator on real-world visual-inertial data while allowing a controlled modification of the available sensing information.

All the experiments reported in this paper are conducted on sequence \textit{V1\_01\_easy}. Unless otherwise stated, the estimator is evaluated over the same temporal interval, starting at timestamp $1403715394962142976$~ns and spanning approximately $7$~s. Using the same dataset sequence and time interval for all sensing configurations allows a direct comparison of the estimation performance under different unknown-input settings while keeping the observed scene and camera motion unchanged.

The visual measurements are obtained from the left monocular camera (\textit{cam0}) of the EuRoC MAV platform.
Point feature tracks are extracted from the image sequence and subsequently verified to ensure consistent point correspondences over the considered time interval.

To obtain bearing measurements from image features, the calibrated camera model provided with the dataset is taken into account.
In particular, the conversion from image pixel coordinates to unit bearing vectors in the camera frame is performed using the camera intrinsic parameters together with the radial-tangential distortion coefficients.
The raw image measurements are therefore undistorted according to the calibrated lens model before computing the corresponding bearing vectors.
This procedure ensures that the visual measurements used by the estimator are expressed consistently with the camera geometry and with the camera-to-body extrinsic calibration provided by the dataset.

Figure~\ref{FigSelectedPoints} shows the first image of the selected time interval (corresponding to timestamp $1403715394962142976$~ns) together with ten tracked point features. These features constitute the pool of visual landmarks employed in the experiments. Depending on the sensing configuration under consideration, only a subset of these features is used by the estimator, while the remaining ones are discarded. This enables a controlled experimental comparison between the minimal and non-minimal Structure-from-Motion configurations analyzed in this paper.

\begin{figure}[t]
\centering
\includegraphics[width=0.9\columnwidth]{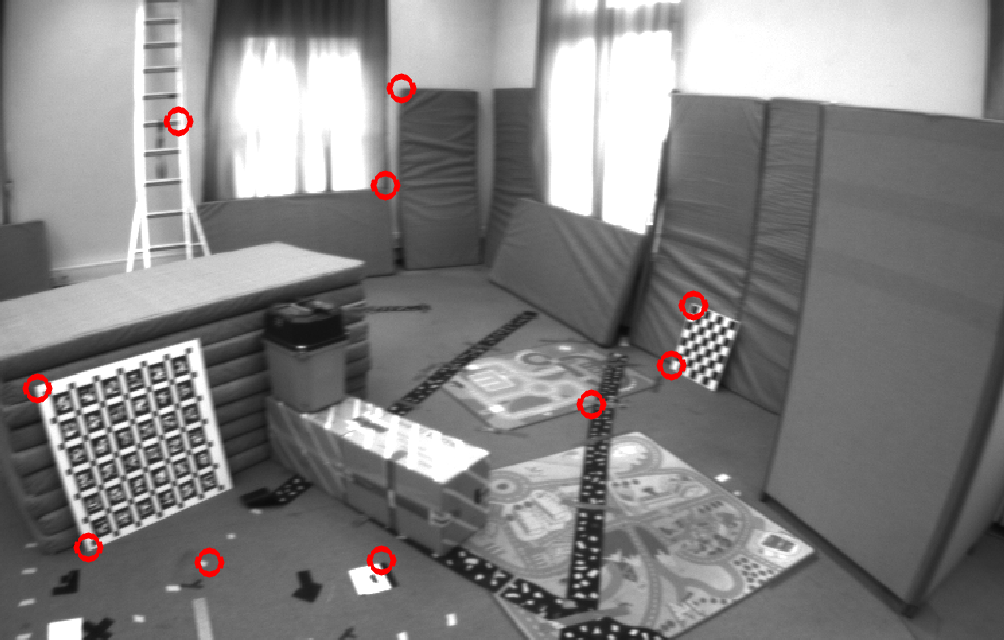}
\caption{First image of the selected EuRoC \textit{V1\_01\_easy} sequence (timestamp $1403715394962142976$~ns) with the ten tracked point features used in the experiments. Depending on the considered sensing configuration, only a subset of these features is employed by the estimator.}
\label{FigSelectedPoints}
\end{figure}

The ground-truth state associated with the selected point features is obtained by exploiting the ground-truth camera trajectory provided by the dataset.
In particular, the bearing measurements of each feature are available over multiple camera poses, while the corresponding camera poses in the global frame are known from the ground-truth trajectory.
By triangulating the feature observations across multiple images, the three-dimensional position of each point feature is accurately reconstructed in the global frame.
The use of multiple views rather than a single image pair improves the robustness and accuracy of the reconstructed feature positions.

Once the feature positions are available in the global frame, they are transformed into the camera frame at each time instant using the ground-truth camera pose.
The corresponding bearing vectors and inverse depths are then computed according to the state parametrization introduced in Section~\ref{SubSectionSfMState}.
This procedure provides a ground-truth reference for the state variables estimated by the EKF, namely the bearing directions and inverse depths of the observed point features.

The selection of the visual features is performed only to provide reliable measurements for the Structure-from-Motion formulation, since the objective of the experiments is to evaluate the proposed unknown-input estimation framework rather than feature detection or data association methods.

The inertial measurements are obtained from the onboard gyroscope.
According to the sensing configuration under evaluation, only a subset of the available angular velocity components is provided to the estimator, while the remaining motion components are treated as unknown inputs within the UID-induced normal form framework.

The ground-truth trajectory provided by the EuRoC MAV dataset is used to reconstruct the feature ground truth, to assess the estimation accuracy, and, in additional experiments, to generate camera-frame linear velocity measurements for configurations involving metric velocity information.

The estimator is initialized as follows. The bearing components of the state are initialized with the bearing measurements corresponding to the first image of the sequence, whereas all inverse-depth variables are initialized to $\lambda^i=1$. Hence, the initial feature depths are completely unknown to the estimator and are not initialized using ground-truth information.
All experiments are performed using the same experimental protocol described above.

\subsection{Evaluated Sensing Configurations}
\label{SubSectionEvaluatedConfigurations}

The considered experimental configurations are selected according to the observability analysis presented in Section~\ref{SubSectionSfMObservability}.
Each configuration corresponds to a different partition between measured motion components and unknown inputs.
Table~\ref{TabExperimentalConfigurations} summarizes the evaluated cases.
It is worth emphasizing that the term \emph{Unobservable} used for configuration C0 has a specific meaning in Table~\ref{TabExperimentalConfigurations}. As established by the observability analysis in Section~\ref{SubSectionSfMObservability}, all configurations that do not include linear velocity measurements remain unobservable because of the intrinsic global scale ambiguity of monocular Structure-from-Motion. This applies to configurations C1, C2, C3, C5, C6, and C8. However, in these cases the global scale factor is the \emph{only} unobservable degree of freedom, so that the three-dimensional structure and camera motion can still be reconstructed uniquely up to scale. By contrast, configuration C0 possesses additional unobservable directions beyond the global scale ambiguity, making Structure-from-Motion impossible even up to an unknown scale factor.

\begin{table}[t]
\centering
\caption{Experimental sensing configurations evaluated in this paper. Minimal configurations predicted by the observability analysis are highlighted in bold.}
\label{TabExperimentalConfigurations}
\renewcommand{\arraystretch}{1.15}
\begin{tabular}{c c c c p{1.8cm}}
\hline
\textbf{Case} &
\textbf{Point} &
\textbf{Gyro} &
\textbf{Linear} &
\textbf{Remarks} \\
 &
\textbf{Features} &
\textbf{Axes} &
\textbf{Velocity Axes} &
\\
\hline
C0 & 2 & 2 & 0 & Unobservable \\
\textbf{C1} & \textbf{3} & \textbf{1} & \textbf{0} & \textbf{Minimal} \\
C2 & 3 & 2 & 0 & --- \\
C3 & 3 & 3 & 0 & --- \\
\textbf{C4} & \textbf{3} & \textbf{0} & \textbf{1} & \textbf{Minimal (metric scale)} \\
\textbf{C5} & \textbf{4} & \textbf{0} & \textbf{0} & \textbf{Minimal (pure vision)} \\
C6 & 4 & 3 & 0 & --- \\
C7 & 4 & 3 & 1 & Metric scale \\
C8 & 8 & 0 & 0 & Pure vision \\
C9 & 8 & 0 & 1 & Metric scale \\
\hline
\end{tabular}
\end{table}

For each configuration, the EKF is implemented using the same UID-induced normal form framework.
Only the available measurements and the corresponding unknown-input partition are modified according to the considered sensing scenario.

The objective is not to optimize the estimator for a specific sensing configuration, but rather to verify whether the practical estimation behavior agrees with the structural observability properties predicted by the theoretical analysis.

\subsection{Experimental Results}
\label{SubSectionExperimentalResults}

The experimental results are evaluated with respect to the state representation introduced in Section~\ref{SubSectionSfMState}. 
Recall that the state consists, for each observed feature, of a unit bearing vector and the corresponding inverse depth. 
The bearing components are directly measured by the monocular camera and constitute the output of the system. 
Throughout all the experiments, their estimation remains very close to the measured values and to the corresponding ground truth. 
For this reason, no quantitative results are reported for the bearing components. 
A more challenging analysis would arise in the presence of image outliers, feature mismatches, or prolonged occlusions, where the visual measurements themselves become unreliable. 
Such issues, however, are outside the scope of this paper, whose objective is to validate the proposed UID-induced normal form as a structural framework for state estimation in the presence of unknown inputs. 
In the considered SfM application, the unknown inputs correspond to the camera motion components that are not directly measured, including linear and angular velocity components. 
Consequently, the experimental evaluation focuses exclusively on the estimation of the inverse-depth variables.

As discussed in Section~\ref{SubSectionSfMObservability}, the absolute values of the inverse depths are not observable in all the considered sensing configurations. 
Whenever metric scale is not observable, monocular Structure-from-Motion can only recover the scene geometry up to an unknown global scale factor. 
In terms of the adopted state parametrization, this implies that the physically meaningful quantities are not the individual inverse depths themselves, but their relative values. 
Accordingly, the estimation accuracy is evaluated through the ratios between the inverse depths associated with the observed point features.

For each sensing configuration, we compute over the entire experimental interval the mean error between the estimated inverse-depth ratios and the corresponding ratios obtained from the ground-truth state. 
Since all inverse-depth variables are initialized to the same value,
\[
\lambda^i=1,
\]
all estimated ratios are initially equal to one, whereas the true ratios generally differ significantly from unity. 
Consequently, the relative-depth error is initially large and progressively decreases as the estimator reconstructs the scene structure. 
The resulting error curves therefore directly illustrate the convergence of the proposed estimator toward the correct three-dimensional structure up to the intrinsic scale ambiguity predicted by the observability analysis.

\begin{table}[t]
\centering
\caption{Final estimation errors of the inverse-depth variables. The relative error evaluates the reconstructed scene geometry independently of the global scale factor, while the absolute error evaluates metric reconstruction.}
\label{TabFinalErrors}
\renewcommand{\arraystretch}{1.15}
\begin{tabular}{ccc}
\hline
\textbf{Case} &
\textbf{Mean Relative} &
\textbf{Mean Absolute} \\
&
\textbf{Inverse-Depth Error} &
\textbf{Inverse-Depth Error} \\
&
\textbf{(--) } &
\textbf{($\mathrm{m}^{-1}$)} \\
\hline
C0 &  0.9215 &  0.8069\\
C1 &  0.0612 &  0.4502\\
C2 &  0.0363 &  0.4387\\
C3 &  0.0088 &  0.4978\\
C4 &  0.1058 & 0.0672\\
C5 &  0.0673 &  0.3095\\
C6 &  0.0045 &  0.3464\\
C7 &  0.0108&  0.0182 \\
C8 &  0.0038&  0.2303\\
C9 &  0.0025&  0.0014\\
\hline
\end{tabular}
\end{table}


For completeness, the final estimation errors of the inverse-depth variables are reported for all the considered sensing configurations. 
The results are summarized in Table~\ref{TabFinalErrors}, where the relative inverse-depth error evaluates the quality of the reconstructed scene geometry independently of the global scale factor, while the absolute inverse-depth error evaluates the accuracy of the metric reconstruction whenever the scale is observable.

The experimental results fully agree with the structural predictions provided by the nonlinear unknown-input observability analysis. 
As shown in Table~\ref{TabFinalErrors}, configurations without metric scale information exhibit relatively large absolute inverse-depth errors. 
This behavior is expected because, in these cases, the estimated scene structure is defined only up to an arbitrary global scale factor. 
Therefore, the absolute values of the inverse depths may differ significantly from the ground truth even when the reconstructed geometry is correct. 
Conversely, the relative inverse-depth errors remain small for all configurations predicted to be observable up to scale.

\begin{figure*}[t]
\centering

\includegraphics[width=0.85\textwidth]{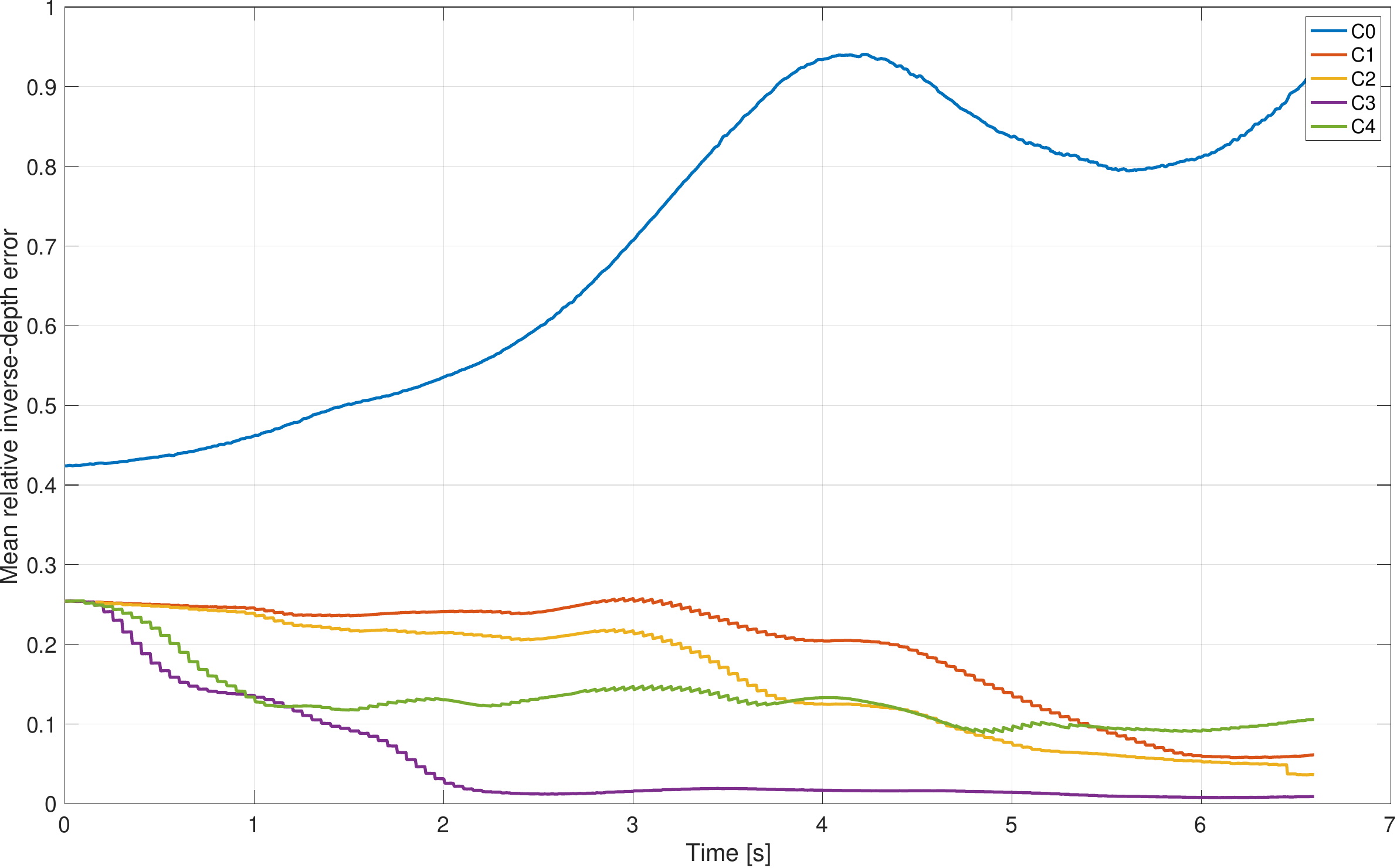}

\vspace{0.3cm}

\includegraphics[width=0.85\textwidth]{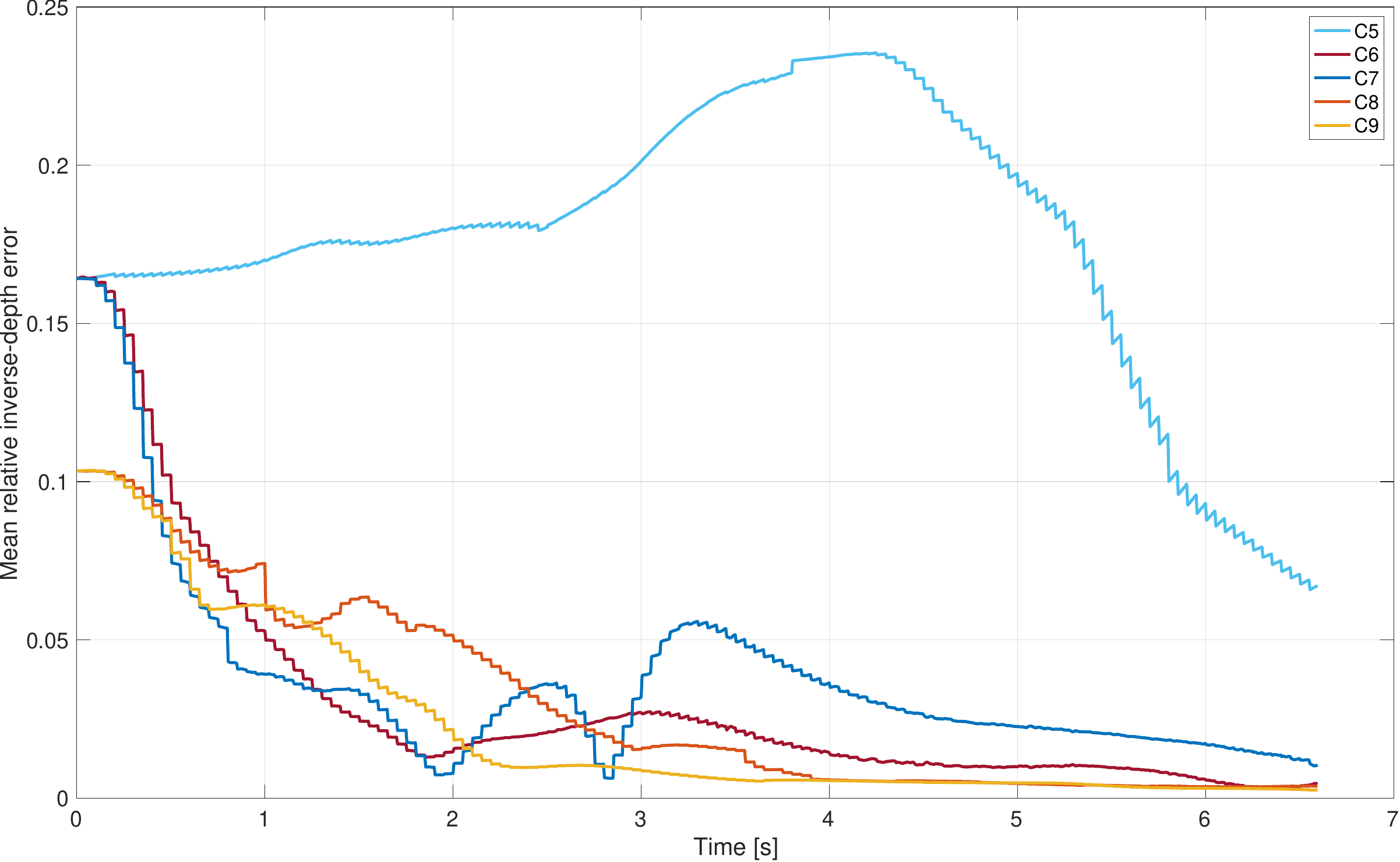}

\caption{Experimental validation of the UID-induced normal form on the EuRoC MAV dataset. 
The plots report the mean relative inverse-depth error over the considered point features during the estimation interval. 
The first plot compares configurations C0--C4, including the unobservable configuration C0 and the minimal configurations involving three point features. 
The second plot reports configurations C5--C9, corresponding to additional observable sensing configurations with increased visual or inertial information. 
The results show that all configurations predicted as observable up to the global scale factor achieve convergence of the relative inverse-depth estimates, whereas the unobservable configuration C0 exhibits a persistent estimation error.}

\label{FigLambdaRelativeError}
\end{figure*}


The convergence behavior of the relative inverse-depth estimates is further illustrated in Figure~\ref{FigLambdaRelativeError}. 
The reported curves show that all configurations predicted by the observability analysis to be observable up to the global scale factor achieve convergence toward the correct relative scene geometry. 
In contrast, configuration C0, which possesses additional unobservable directions beyond the intrinsic scale ambiguity, fails to reconstruct the structure and maintains a significantly larger error throughout the estimation interval.

When metric information is provided through linear velocity measurements, the global scale factor becomes observable. 
As a consequence, configurations C4, C7, and C9 achieve accurate estimation not only of the relative inverse depths but also of their absolute values, as reflected by the significantly smaller absolute inverse-depth errors reported in Table~\ref{TabFinalErrors}. 
These results demonstrate that the UID-induced normal form correctly captures the structural decomposition revealed by unknown-input observability and enables the EKF formulation to achieve the estimation behavior predicted by the theoretical analysis.

\section{Conclusion}\label{SectionConclusion}

This paper provided the first general structural solution to state estimation for nonlinear systems driven by unknown inputs.
Building upon the nonlinear unknown-input observability theory developed in \cite{TAC19,IF22,TAC_UIO}, we showed that every nonlinear system with unknown inputs admits a canonical representation, referred to as the \emph{UID-induced normal form}.
For any admissible selection of unknown input identifiers (UIDs), this representation is uniquely determined by the selected UIDs and reveals the intrinsic decomposition of the unknown-input information affecting the observable dynamics.
The construction does not require any model, stochastic assumption, or dynamical description of the unknown inputs.

The UID-induced normal form provides a complete structural decomposition of the information carried by the unknown inputs.
Specifically, it separates unknown-input directions that have no influence on the observable dynamics and are therefore structurally decoupled from state estimation, from the complementary unknown-input information that affects the observable dynamics and is represented through recursively generated observable quantities.
Although different admissible UID selections may lead to different coordinate representations of the unknown-input space, they are equivalent with respect to the observable dynamics.
As a consequence, unknown-input decoupling and unknown-input reconstruction are no longer separate problems, but emerge as two complementary aspects of the same observability-based decomposition.
The proposed framework therefore establishes a unified structural foundation for state estimation in the presence of unknown inputs.

The practical relevance of this result was demonstrated through a Structure-from-Motion application.
The UID-induced normal form enabled the construction of an EKF-based recursive estimator for a previously unexplored minimal sensing configuration, where three-dimensional structure and camera motion are recovered, up to the intrinsic global scale ambiguity, from only three point features and a single-axis gyroscope.
Experimental results on real-world data validated the proposed approach and confirmed the feasibility of the identified minimal configuration.
Moreover, the analysis of additional sensing configurations showed the consistency of the proposed framework with the observability properties predicted by classical SfM theory.

Beyond the considered application, the UID-induced normal form provides a general structural representation for nonlinear systems affected by unknown inputs.
Since it is derived from the observability structure of the system, it offers a principled foundation for the systematic development of estimation methods in a broad range of applications, including observer design, filtering, fault diagnosis, resilient estimation, and sensor fusion.
More generally, this work suggests that unknown-input estimation can be approached not through additional assumptions on the unknown inputs, but by exploiting the intrinsic observability structure of the system.

\appendix

\section{Proof of Theorem \ref{PropertyIrrilevance}}\label{AppendixIrrilevance}

We first establish the following preliminary result.

\begin{lm}\label{Lemma1}
Consider the system expressed in the UID-induced normal form~\eqref{EquationUIDNF}.
Then, for every smooth function $\phi=\phi(x,t)$ whose differential belongs to the
observable codistribution, i.e., $\diff\phi\in\OBS$, the following property holds:
\[
\frac{\partial \phi}{\partial x} \cdot \giat^k = 0, \qquad \forall k>m_H.
\]
\end{lm}

\begin{proof}
By assumption, the differential of $\phi=\phi(x,t)$ belongs to the observable
codistribution $\OBS$. This codistribution is generated by Algorithm~\ref{AlgoFull}.
The algorithm terminates either through Line~\ref{AlgoLineConditionOmegaStarReturn}
or through Line~\ref{AlgoLineFINALSTEP}.

The case in which the algorithm terminates through Line~\ref{AlgoLineFINALSTEP}
can be excluded from further consideration, since in this case $m_H=\mw$,
and hence there exists no index $k>m_H$.

Conversely, suppose that the algorithm terminates through
Line~\ref{AlgoLineConditionOmegaStarReturn}. In this case, we have
\[
\OBS=\Omega_*,
\]
where $\Omega_*$ is the codistribution defined in Line~\ref{AlgoLineOmegaStar}.
The termination condition in Line~\ref{AlgoLineConditionOmegaStarReturn} implies
that the condition in Line~\ref{AlgoLineConditionOmegaStarIf} is not satisfied.
Equivalently, we have
\[
\giat^k\in\Omega_*^\bot, \qquad \forall k>m_H.
\]
Since $\OBS=\Omega_*$, the above relation can be rewritten as
\[
\giat^k\in\OBS^\bot, \qquad \forall k>m_H.
\]
Finally, because $\diff\phi\in\OBS$, it follows directly from the definition of
the annihilator that
\[
\frac{\partial \phi}{\partial x}\cdot\giat^k=0,
\qquad \forall k>m_H,
\]
which proves the claim.
$\blacktriangleleft$
\end{proof}

\vspace{.2cm}

We are now ready to prove Theorem~\ref{PropertyIrrilevance}.

\begin{proof}
The result follows by direct computation. Consider the time derivative of
$\phi=\phi(x,t)$:
\[
\dot{\phi}
=
\frac{\partial \phi}{\partial x}\dot{x}
+
\frac{\partial \phi}{\partial t}.
\]
Substituting the dynamics of the UID-induced normal form~\eqref{EquationUIDNF},
we obtain
\[
\dot{\phi}
=
\frac{\partial \phi}{\partial x}\cdot
\left(
\giat^0
+
\sum_{j=1}^{m_H}\giat^j v_j
+
\sum_{k=m_H+1}^{\mw}\giat^k w_k
\right)
+
\frac{\partial \phi}{\partial t},
\]
which can be rewritten as
\[
\dot{\phi}
=
\dt{\Li}_{\giat^0}\phi
+
\sum_{j=1}^{m_H}
\Li_{\giat^j}\phi\, v_j
+
\frac{\partial \phi}{\partial x}\cdot
\left(
\sum_{k=m_H+1}^{\mw}\giat^k w_k
\right).
\]

By Lemma~\ref{Lemma1}, we have
\[
\frac{\partial \phi}{\partial x}\cdot\giat^k=0,
\qquad \forall k>m_H.
\]
Consequently, the last term in the previous expression vanishes, yielding
\[
\dot{\phi}
=
\dt{\Li}_{\giat^0}\phi
+
\sum_{j=1}^{m_H}
\Li_{\giat^j}\phi\, v_j .
\]
Therefore, the time evolution of any function whose differential belongs to the
observable codistribution is independent of the input components $w_k$,
for all $k>m_H$, which proves the theorem.
$\blacktriangleleft$
\end{proof}
\section{Proof of Theorem \ref{PropertyReconstructability}}\label{AppendixReconstructability}

\begin{proof}
The result follows directly from the construction of $v_1,\ldots,v_{m_H}$. From~\eqref{Equationvj}, each component $v_j$ satisfies
\[
v_j = \dot{\tih}_j, \qquad j=1,\ldots,m_H.
\]
In addition, each $\tih_j$ is an observable quantity, i.e., $\diff \tih_j \in \OBS$. Therefore, $\tih_j$ can be computed from the system outputs and a finite number of their time derivatives. As a consequence, its time derivative $v_j$ is also reconstructible from the outputs and their derivatives. $\blacktriangleleft$
\end{proof}

\end{document}